	\newcommand{\ftn}[3]{ #1 : #2 \rightarrow #3 }
		\newcommand{\setof}[2]{\ensuremath{\left\{ #1 \: : \: #2 \right\}}}
\newcommand{\cok}{\operatorname{cok}}
	\newcommand{\e}{\ensuremath{\mathfrak{e}}\xspace}
	\newcommand{\ZZ}{\ensuremath{\mathbb{Z}}\xspace}
	\newcommand{\OO}{\mathcal{O}}
	\newcommand{\NN}{\ensuremath{\mathbb{N}}\xspace}
	\newcommand{\KKK}{\ensuremath{\mathbb{K}}\xspace}
	\newcommand{\kk}{\ensuremath{\mathit{KK}}\xspace}
	\newcommand{\Hom}{\ensuremath{\operatorname{Hom}}}
	\newcommand{\diag}{\ensuremath{\operatorname{diag}}}
	\newcommand{\id}{\ensuremath{\operatorname{id}}}
	\newcommand{\multialg}[1]{\mathcal{M}(#1)\xspace}
	\newcommand{\corona}[1]{\mathcal{Q}(#1)\xspace}
	\newcommand{\cstar}{{$C \sp \ast$}\xspace}
	\newcommand{\R}{\ensuremath{\mathbb{R}}\xspace}
	\newcommand{\N}{\ensuremath{\mathbb{N}}\xspace}
	\newcommand{\K}{\ensuremath{\mathbb{K}}\xspace}
	\newcommand{\catc}{\mathfrak{C}^{*}\text{-}\mathfrak{alg}}
	\newcommand{\catcn}{\mathfrak{C}^{*}\text{-}\mathfrak{alg}_{ \mathrm{nuc} }}
	\newcommand{\kkc}{\mathfrak{KK}}
	\newcommand{\kkcn}{\mathfrak{KK}_{ \mathrm{nuc} } }
	\theoremstyle{plain}
	\newtheorem{thm}{Theorem}[section]
	\newtheorem{lemma}[thm]{Lemma}
	\newtheorem{theor}[thm]{Theorem}
	\newtheorem{propo}[thm]{Proposition}
	\newtheorem{corol}[thm]{Corollary}
	\newtheorem{remar}[thm]{Remark}	
	\theoremstyle{definition}
	\newtheorem{defin}[thm]{Definition}
	\newtheorem{examp}[thm]{Example}
	\numberwithin{equation}{section}
	\numberwithin{figure}{section}
\begin{document}
	\title[Classifying $C^*$-algebras]{Classifying $C^*$-algebras with both finite and infinite subquotients} 
	\author{S{\o}ren Eilers}
        \address{Department of Mathematical Sciences \\
        University of Copenhagen\\
        Universitetsparken~5 \\
        DK-2100 Copenhagen, Denmark}
        \email{eilers@math.ku.dk }
        \author{Gunnar Restorff}
\address{Faculty of Science and Technology\\University of Faroe 
Islands\\N\'oat\'un 3\\FO-100 T\'orshavn\\Faroe Islands}
\email{gunnarr@setur.fo}

	\author{Efren Ruiz}
        \address{Department of Mathematics\\University of Hawaii,
Hilo\\200 W. Kawili St.\\
Hilo, Hawaii\\
96720-4091 USA}
        \email{ruize@hawaii.edu}
        \date{\today}
	
%AMS info

	\keywords{Classification, Extensions, Graph algebras}
	\subjclass[2000]{Primary: 46L35, 37B10 Secondary: 46M15, 46M18}

	\begin{abstract}
	We give a classification result for a certain class of $C^{*}$-algebras $\mathfrak{A}$ over a finite topological space $X$ in which there exists an open set $U$ of $X$ such that $U$ separates the finite and infinite subquotients of $\mathfrak{A}$.  We apply our results to $C^{*}$-algebras arising from graphs. 
	\end{abstract}
        \maketitle
\section{Introduction}

Just like a finite group, any $C^*$-algebra $\mathfrak A$ with finitely many ideals has a decomposition series
\[
0=\mathfrak I_0\triangleleft \mathfrak I_1\triangleleft \dots \triangleleft \mathfrak I_n=\mathfrak A
\]
in such a way that all subquotients are simple.  As in the group case, the simple subquotients are unique up to permutation of isomorphism classes, but far from determine the isomorphism class of $\mathfrak A$.

If we further assert that all simple subquotients are classifiable by algebraic invariants such as $K$-theory we are naturally lead to the pertinent question of which algebraic invariants, if any, classify all of $\mathfrak A$. This question has previously  been studied, leading to a complete solution when all subquotients are AF (cf.\ \cite{af}), and a partial solution when  all are purely infinite (cf.\ \cite{rmrn:uctkkx}, \cite{beko}, \cite{cp:ckalg}), but in the case when some are of one type and some of another, only sporadic results have been found. It is the purpose of this paper to provide a general framework in which classification of $C^*$-algebras can be proved for a large class of $C^*$-algebras of mixed type.

We are able to do so by combining several recent important developments in classification theory, notably
\begin{itemize}
\item Kirchberg's isomorphism result \cite{kirchpure}
\item The corona factorization property \cite{KucNgCFPdef}
\item The universal coefficient theorem of Meyer and Nest \cite{rmrn:uctkkx}
\end{itemize}
with refinements of our previous work \cite{ERRshift} inspired by an idea of R\o rdam \cite{extpurelyinf}.

As useful as these problems may be to test the borders of our understanding of classification we are driven to this project by one class of examples. A \textbf{graph $C^*$-algebra} has the property that all simple subquotients are either AF or purely infinite, and examples of mixed type occur even for very small graphs. For instance, consider the graphs $\{E_n\}_{n\in\NN}$ given below
\begin{center}
\begin{tabular}{ccc}
$\begin{bmatrix}
0&0&0\\
n&3&0\\
1&1&3
\end{bmatrix}$&&$
\xymatrix{&{\bullet}&\\{\bullet}\ar@{-->}[ur]^-{n}\ar@3{->}@(dl,dr)[]&{\bullet}\ar[u]\ar[l]\ar@3{->}@(dl,dr)[]\\&&}$
\end{tabular}
\end{center}
For any $n>0$, $C^*(E_n)$ decomposes into a linear lattice with simple subquotients
\[
\KKK,\OO_3\otimes\KKK,\OO_3.
\]
The results presented in this paper show that $C^*(E_n)\otimes \KKK\simeq C^*(E_{n+4})\otimes \KKK$ and that
there are three stable isomorphism classes
\[
[C^*(E_1)]=[C^*(E_3)], [C^*(E_2)], [C^*(E_4)]
\]

As in our previous paper \cite{ERRshift} the technical focal point in this work is the general question of when one can deduce from the fact that $\mathfrak A$ and $\mathfrak B$ in the extension
\[
\xymatrix{
  0\ar[r]&{\mathfrak B}\ar[r]&{\mathfrak E}\ar[r]&{\mathfrak A}\ar[r]&0}
\]
are classifiable by $K$-theory, that the same is true for $\mathfrak E$. We shall fix a finite (not necessarily Hausdorff) $T_{0}$ topological space $X$ with a non-trivial open subset $U\subseteq X$ and require that $\mathfrak E$ is a $C^*$-algebra over $X$ -- associating ideals in $\mathfrak E$  with open subsets of $X$ -- in such a way that $\mathfrak B$ is the $C^*$-algebra corresponding to $U$. Assuming then that $\mathfrak A$ and $\mathfrak B$ are  $K\!K$-strongly classifiable by their filtered and ordered $K$-theories over $X\backslash U$ and $U$, respectively, we supply conditions on the extension securing that also $\mathfrak E$ is classifiable by filtered and ordered $K$-theory. Our key technical result to this effect, Theorem \ref{thm:intkkel} below, provides stable isomorphism in this context under, among other things, the provision of fullness of the extension  and $K\! K$-liftability of morphisms of filtered $K$-theory. $K\!K$-liftability follows in many cases by the UCT of Meyer and Nest \cite{rmrn:uctkkx} as generalized by Bentmann and K\"ohler \cite{beko}, and we develop in Section \ref{full} several useful tools to establish fullness.

Although we are confident that Theorem \ref{thm:intkkel} will apply in other settings as well, we restrict ourselves to demonstrating how the results lead to classification of certain graph $C^*$-algebras up to stable isomorphism, generalizing results by the first named author and Tomforde in \cite{semt_classgraphalg}. As a consequence of the results in \cite{semt_classgraphalg}, all graph $C^*$-algebras with exactly one non-trivial ideal are classifiable up to stable isomorphism by the six-term exact sequence in $K$-theory together with the positive cone of the ideal, algebra, and quotient, irrespective of the types of the simple subquotients.  Indicating in a (Hasse) diagram of the ideal lattice a simple AF subquotient with a straight line, and a purely infinite subquotient with a curly line (the zero ideal indicated by ``$\circ$''),  the results in \cite{semt_classgraphalg} solved all the cases
\[
\xymatrix@-0.25cm{{\bullet}\\{\bullet}\ar@{-}[u]\\{\circ}\ar@{-}[u]}
\qquad
\xymatrix@-0.25cm{{\bullet}\\{\bullet}\ar@{-}[u]\\{\circ}\ar@{~}[u]}
\qquad
\xymatrix@-0.25cm{{\bullet}\\{\bullet}\ar@{~}[u]\\{\circ}\ar@{-}[u]}
\qquad
\xymatrix@-0.25cm{{\bullet}\\{\bullet}\ar@{~}[u]\\{\circ}\ar@{~}[u]}
\]
Further, for graph $C^{*}$-algebras with a maximal proper ideal which is AF, indicated diagrammatically as
\[
\xymatrix@-0.25cm{
&{\bullet}&\\ 
&{\bullet}\ar@{~}[u]&\\
{}\ar@{-}[ru]&{}\ar@{-}[u]&{}\ar@{-}[lu]
}
\]
are classifiable up to stable isomorphism by the six-term exact sequence in $K$-theory together with the positive cone of the ideal, algebra, and quotient.  In this paper we generalize to the settings
\[
\xymatrix@-0.25cm{\mbox{}&\mbox{}&\mbox{}\\ &{\bullet}\ar@{-}[u]\ar@{-}[ul]\ar@{-}[ur]&\\ & {\vdots}\ar@{~}[u]&\\ &{\circ}\ar@{~}[u]&}
\qquad
\xymatrix@-0.25cm{&{\bullet}&\\&{\vdots}\ar@{~}[u]&\\&{\bullet}\ar@{~}[u]&\\{}\ar@{-}[ru]{}&\ar@{-}[u]{}&\ar@{-}[lu]}
\]
where the ideal lattice at one end (top or bottom) has a finite linear lattice of purely infinite $C^*$-algebras, and at the other has an AF algebra. In particular, we cover all graph $C^*$-algebras of finite linear lattices which has no more than one transition from finite to infinite subquotient, such as $C^*(E_n)$ defined above. We speculate that this condition is not necessary (although it certainly is for our approach) and in \cite{segrer:gclil} give partial evidence for this, under extra conditions on the algebraic nature of the involved $K$-theory.

 \section{Notation and Conventions} 
 
 We first start with some definitions and conventions that will be used throughout the paper.

\subsection{$C^{*}$-algebras over topological spaces} Let $X$ be a topological space and let $\mathbb{O}( X)$ be the set of open subsets of $X$, partially ordered by set inclusion $\subseteq$.  A subset $Y$ of $X$ is called \emph{locally closed} if $Y = U \setminus V$ where $U, V \in \mathbb{O} ( X )$ and $V \subseteq U$.  The set of all locally closed subsets of $X$ will be denoted by $\mathbb{LC}(X)$.  The set of all connected, non-empty locally closed subsets of $X$ will be denoted by $\mathbb{LC}(X)^{*}$.  

The partially ordered set $( \mathbb{O} ( X ) , \subseteq )$ is a \emph{complete lattice}, that is, any subset $S$ of $\mathbb{O} (X)$ has both an infimum $\bigwedge S$ and a supremum $\bigvee S$.  More precisely, for any subset $S$ of $\mathbb{O} ( X )$, 
\begin{equation*}
\bigwedge_{ U \in S } U = \left( \bigcap_{ U \in S } U \right)^{\circ} \quad \mathrm{and} \quad \bigvee_{ U \in S } U = \bigcup_{ U \in S } U
\end{equation*}

For a $C^{*}$-algebra $\mathfrak{A}$, let $\mathbb{I} ( \mathfrak{A} )$ be the set of closed ideals of $\mathfrak{A}$, partially ordered by $\subseteq$.  The partially ordered set $( \mathbb{I} ( \mathfrak{A} ), \subseteq )$ is a complete lattice.  More precisely, for any subset $S$ of $\mathbb{I} ( \mathfrak{A} )$, 
\begin{equation*}
\bigwedge_{ \mathfrak{I} \in S } \mathfrak{I} = \bigcap_{ \mathfrak{I} \in S } \mathfrak{I}  \quad \mathrm{and} \quad \bigvee_{ \mathfrak{I} \in S } \mathfrak{I} = \overline{ \sum_{ \mathfrak{I} \in S } \mathfrak{I} }
\end{equation*}

\begin{defin}
Let $\mathfrak{A}$ be a $C^{*}$-algebra.  Let $\mathrm{Prim} ( \mathfrak{A} )$ denote the \emph{primitive ideal space} of $\mathfrak{A}$, equipped with the usual hull-kernel topology, also called the Jacobson topology.

Let $X$ be a topological space.  A \emph{$C^{*}$-algebra over $X$} is a pair $( \mathfrak{A} , \psi )$ consisting of a $C^{*}$-algebra $\mathfrak{A}$ and a continuous map $\ftn{ \psi }{ \mathrm{Prim} ( \mathfrak{A} ) }{ X }$.  A $C^{*}$-algebra over $X$, $( \mathfrak{A} , \psi )$, is \emph{separable} if $\mathfrak{A}$ is a separable $C^{*}$-algebra.  We say that $( \mathfrak{A} , \psi )$ is \emph{tight} if $\psi$ is a homeomorphism.  
\end{defin}

We always identify $\mathbb{O} ( \mathrm{Prim} ( \mathfrak{A} ) )$ and $\mathbb{I} ( \mathfrak{A} )$ using the lattice isomorphism
\begin{equation*}
U \mapsto \bigcap_{ \mathfrak{p} \in \mathrm{Prim} ( \mathfrak{A} ) \setminus U } \mathfrak{p}
\end{equation*}
 Let $( \mathfrak{A} , \psi )$ be a $C^{*}$-algebra over $X$.  Then we get a map $\ftn{ \psi^{*} }{ \mathbb{O} ( X ) }{ \mathbb{O} ( \mathrm{Prim} ( \mathfrak{A} ) )  \cong \mathbb{I} ( \mathfrak{A} ) }$ defined by
\begin{equation*}
U \mapsto \setof{ \mathfrak{p} \in \mathrm{Prim} ( \mathfrak{A} ) }{ \psi ( \mathfrak{p} ) \in U } = \mathfrak{A}( U )
\end{equation*}
For $Y = U \setminus V \in \mathbb{LC} ( X )$, set $\mathfrak{A}(Y) = \mathfrak{A} (U) / \mathfrak{A}(V)$.   By Lemma 2.15 of \cite{rmrn:bootstrap}, $\mathfrak{A} ( Y)$ does not depend on $U$ and $V$.

\bigskip

In this paper, we will be mainly interested in the following examples:

\begin{examp}
For any $C^{*}$-algebra $\mathfrak{A}$, the pair $( \mathfrak{A} , \id_{ \mathrm{Prim} ( \mathfrak{A} ) } )$ is a tight $C^{*}$-algebra over $\mathrm{Prim} ( \mathfrak{A} )$.  For each $U \in \mathbb{O} ( \mathrm{Prim} ( \mathfrak{A} ) )$, the ideal $\mathfrak{A} ( U )$ equals $\bigcap_{ \mathfrak{p} \in \mathrm{Prim} ( \mathfrak{A} ) \setminus U } \mathfrak{p}$. 
\end{examp}

\begin{examp}\label{Xn}
Let $X_{n} = \{ 1, 2, \dots, n \}$ partially ordered with $\leq$.  Equip $X_{n}$ with the Alexandrov topology, so the non-empty open subsets are 
\begin{equation*}
[a,n ] = \setof{ x \in X }{ a \leq x \leq n}
\end{equation*}
for all $a \in X_{n}$; the non-empty closed subsets are $[1,b]$ with $b \in X_{n}$, and the  non-empty locally closed subsets are those of the form $[a,b]$ with $a, b \in X_{n}$ and $a \leq b$.  Let $( \mathfrak{A} , \phi )$ be a $C^{*}$-algebra over $X_{n}$.  We will use the following notation throughout the paper:  
\begin{equation*}
\mathfrak{A} [k] = \mathfrak{A} ( \{ k \} ),\ \mathfrak{A}[a,b] = \mathfrak{A} ( [ a , b ] ),\ \text{and} \ \mathfrak{A}(i, j] = \mathfrak{A}[i+1, j].
\end{equation*}
Using the above notation we have ideals $\mathfrak{A} [ a, n ]$ such that 
\begin{equation*}
\{ 0 \} \unlhd \mathfrak{A} [ n]  \unlhd \mathfrak{A} [ n -1, n ]  \unlhd \cdots \unlhd \mathfrak{A}  [ 2, n ]  \unlhd \mathfrak{A}  [1,n] = \mathfrak{A}
\end{equation*}  
\end{examp}

\begin{defin}
Let $\mathfrak{A}$ and $\mathfrak{B}$ be $C^{*}$-algebras over $X$.  A homomorphism $\ftn{ \phi }{ \mathfrak{A} }{ \mathfrak{B} }$ is \emph{$X$-equivariant} if $\phi ( \mathfrak{A} (U) ) \subseteq \mathfrak{B} ( U )$ for all $U \in \mathbb{O}(X)$.  Hence, for every $Y = U \setminus V$, $\phi$ induces a homomorphism $\ftn{ \phi_{Y} }{ \mathfrak{A} ( Y ) }{ \mathfrak{B} (Y) }$.  Let $\catc(X)$ be the category whose objects are $C^{*}$-algebras over $X$ and whose morphisms are $X$-equivariant homomorphisms.  
\end{defin}

\begin{remar}
Suppose $\mathfrak{A}$ and $\mathfrak{B}$ are tight $C^{*}$-algebras over $X_{n}$.  Then it is clear that a $*$-homomorphism $\ftn{ \phi }{ \mathfrak{A} }{ \mathfrak{B} }$ is an isomorphism if and only if $\phi$ is an $X_{n}$-equivariant isomorphism.  We will use this fact in Theorem \ref{thm:classgraph}.
\end{remar}

\begin{remar}
Let $\mathfrak{e}_{i} : 0 \to \mathfrak{B}_{i} \to  \mathfrak{E}_{i} \to \mathfrak{A}_{i} \to 0$ be an extension for $i = 1, 2$.  Note that $\mathfrak{E}_{i}$ can be considered as a $C^{*}$-algebra over $X_{2} = \{ 1,2 \}$ by sending $\emptyset$ to the zero ideal, $\{2\}$ to the image of $\mathfrak{B}_{i}$ in $\mathfrak{E}_{i}$, and $\{1 , 2 \}$ to $\mathfrak{E}_{i}$.  Hence, there exists a one-to-one correspondence between $X_{2}$-equivariant homomorphisms $\ftn{ \phi }{ \mathfrak{E}_{1} }{ \mathfrak{E}_{2} }$ and homomorphisms from $\mathfrak{e}_{1}$ and $\mathfrak{e}_{2}$.
\end{remar}

\subsection{Filtered ordered $K$-theory}

\begin{defin}
Let $X$ be a $T_{0}$ topological space and let $\mathfrak{A}$ be a $C^{*}$-algebra over $X$.  For open subsets $U_{1} , U_{2} , U_{3}$ of $X$ with $U_{1} \subseteq U_{2} \subseteq U_{3}$, set $Y_{1} = U_{2} \setminus U_{1}, Y_{2} = U_{3} \setminus U_{1}, Y_{3} = U_{3} \setminus U_{2} \in \mathbb{LC} ( X )$.  Then we have a six term exact sequence
\begin{equation*}
\xymatrix{
K_{0} ( \mathfrak{A} ( Y_{1}  ) ) \ar[r]^{ \iota_{*} } & K_{0} ( \mathfrak{A} ( Y_{2} ) ) \ar[r]^{ \pi_{*} } & K_{0} ( \mathfrak{A} ( Y_{3} ) ) \ar[d]^{\partial_{*}} \\
K_{1} ( \mathfrak{A} ( Y_{3}  ) ) \ar[u]^{ \partial_{*}} & K_{1} ( \mathfrak{A} ( Y_{2} ) ) \ar[l]^{ \pi_{*} } & K_{1} ( \mathfrak{A} ( Y_{1} ) ) \ar[l]^{\iota_{*}}
}
\end{equation*}
The \emph{filtered $K$-theory $\mathrm{FK} _{X} ( \mathfrak{A} )$ of $\mathfrak{A}$} is the collection of all $K$-groups thus occurring and the natural transformations $\{ \iota_{*}, \pi_{*}, \partial_{*} \}$.  The \emph{filtered, ordered $K$-theory $\mathrm{FK} _{X}^{+} ( \mathfrak{A} )$ of $\mathfrak{A}$} is $\mathrm{FK} _{X} ( \mathfrak{A} )$ of $\mathfrak{A}$ together with $K_{0} ( \mathfrak{A} ( Y ) )_{+}$ for all $Y \in \mathbb{LC} ( X )$.

Let $\mathfrak{A}$ and $\mathfrak{B}$ be $C^{*}$-algebras over $X$.  An \emph{isomorphism} $\ftn{\alpha }{ \mathrm{FK}_{X} ( \mathfrak{A} ) }{  \mathrm{FK}_{ X } ( \mathfrak{B} ) }$ is a collection of group isomorphisms
\begin{equation*}
\ftn{\alpha_{Y, *}} { K_{*} ( \mathfrak{A} (Y) ) }{ K_{*} ( \mathfrak{B} (Y) ) }
\end{equation*}
for each $Y  \in \mathbb{LC} ( X )$ preserving all natural transformations. An
\emph{isomorphism} $\ftn{ \alpha }{  \mathrm{FK}_{X}^{+} ( \mathfrak{A} ) }{  \mathrm{FK}_{ X }^{+} ( \mathfrak{B} ) }$ is an isomorphism $\ftn{\alpha }{ \mathrm{FK}_{X} ( \mathfrak{A} ) }{  \mathrm{FK}_{ X } ( \mathfrak{B} ) }$ which satisfies that $\alpha_{Y,0}$ is an order isomorphism for all $Y  \in \mathbb{LC} ( X )$.
\end{defin}

If $Y \in \mathbb{LC}(X)$ such that $Y = Y_{1} \sqcup Y_{2}$ with two disjoint relatively open subsets $Y_{1} , Y_{2} \in \mathbb{O} ( Y ) \subseteq \mathbb{LC} (X)$, then $\mathfrak{A} (Y) \cong \mathfrak{A} ( Y_{1} ) \times \mathfrak{A} ( Y_{2} )$ for any $C^{*}$-algebra over $X$.  Moreover, there is a natural isomorphism $K_{*} ( \mathfrak{A} (Y) )$ to $K_{*} ( \mathfrak{A}( Y_{1} ) ) \oplus K_{*} ( \mathfrak{A} ( Y_{2} ) )$ which is a positive isomorphism from $K_{0} ( \mathfrak{A} (Y) )$ to $K_{0} ( \mathfrak{A}( Y_{1} ) ) \oplus K_{0} ( \mathfrak{A} ( Y_{2} ) )$.  If $X$ is finite, any locally closed subset is a disjoint union of its connected components.  Therefore, we lose no information when we replace $\mathbb{LC} ( X )$ by the subset $\mathbb{LC} ( X )^{*}$.  This observation reduces computation and will be used in Section \ref{examples}.

\section{The $\kk( X_{2}; - , - )$ functor}

Let $a$ be an element of a \cstar-algebra $\mathfrak{A}$.  We say that $a$ is \emph{norm-full in $\mathfrak{A}$} if $a$ is not contained in any norm-closed proper
ideal of $\mathfrak{A}$.  The word ``full'' is also widely used, but since we
will often work in multiplier algebras, we emphasize that it is the
norm topology we are using, rather than the strict topology.  We say that a sub-$C^{*}$-algebra $\mathfrak{B}$ of a $C^{*}$-algebra $\mathfrak{A}$ is \emph{norm-full} if the norm-closed ideal generated by $\mathfrak{B}$ is $\mathfrak{A}$.

\begin{defin}\label{fullext}
An extension $\e$ is said to be \emph{full} if the associated Busby
invariant $\tau_\e$ has the property that $\tau_\e(a)$ is a norm-full
element of $\corona{\mathfrak{B}}  = \multialg{ \mathfrak{B} } / \mathfrak{B}$ for every $a\in \mathfrak{A} \backslash\{0\}$.
\end{defin}

We now define some functors that will be used throughout the rest of the paper.  Let $X$ and $Y$ be $T_{0}$ topological spaces.  For every continuous function $\ftn{f}{X}{Y}$ we have a functor
\begin{equation*}
\ftn{ f }{\catc(X) } { \catc( Y) }, \quad ( A , \psi ) \mapsto (A , f \circ \psi ) 
\end{equation*}  

\begin{itemize}
\item[(1)] Define $\ftn{ g_{ X }^{1} }{ X }{ X_{1} }$ by $g_{X}^{1} ( x )  = 1$.  Then $g_{X}^{1}$ is continuous.  Note that the induced functor $\ftn{ g_{X}^{1} }{ \catc(X) } { \catc( X_{1} ) }$ is the forgetful functor.

\item[(2)]  Let $U$ be an open subset of $X$.  Define $\ftn{ g_{U, X}^{ 2 }}{ X } { X_{2} }$ by $g_{U,X}^{2}(x) = 1$ if $x \notin U$ and $g_{U, X }^{ 2}(x) = 2$ if $x \in U$.  Then $g_{U,X }^{2}$ is continuous.  Thus the induced functor
\begin{equation*}
\ftn{ g_{ U, X}^{2} }{ \catc(X) } { \catc( X_{2} ) }
\end{equation*} 
is just specifying the extension $0 \to \mathfrak{A}(U) \to \mathfrak{A} \to \mathfrak{A} / \mathfrak{A}(U) \to 0$.

\item[(3)] We can generalize (2) to finitely many ideals.  Let $U_{1} \subseteq U_{2} \subseteq \cdots \subseteq U_{n} = X$ be open subsets of $X$.  Define $\ftn{g_{ U_{1} , U_{2} , \dots, U_{n} , X }^{n} }{ X }{ X_{n} }$ by $ g_{ U_{1} , U_{2} , \dots, U_{n} , X }^{n}  (x) = n-k+1$ if $x \in U_{k} \setminus U_{k-1}$.  Then $g_{ U_{1} , U_{2} , \dots, U_{n} , X }^{n}$ is continuous.  Therefore, any $C^{*}$-algebra with ideals $0 \unlhd \mathfrak{I}_{1} \unlhd \mathfrak{I}_{2} \unlhd \cdots \mathfrak{I}_{n} = \mathfrak{A}$ can be made into a $C^{*}$-algebra over $X_{n}$.

\item[(4)]  For all $Y \in \mathbb{LC} ( X )$, $\ftn{ r_{X}^{Y} }{ \catc(X) }{ \catc(Y) }$ is the restriction functor defined in Definition 2.19 of \cite{rmrn:bootstrap}
\end{itemize}
Let $\mathfrak{K}\mathfrak{K}(X)$ be the category whose objects are separable $C^{*}$-algebras over $X$ and the set of morphisms is $\kk( X ; \mathfrak{A} , \mathfrak{B} )$.  By Proposition 3.4 of \cite{rmrn:bootstrap}, these functors induce functors from $\kkc( X )$ to $\kkc ( Z)$, where $Z = Y, X_{1}, X_{n}$.  

\begin{lemma}\label{lem:commfunctor}
Let $U$ be an open set of $X$ and $Y = X \setminus U$.  Then
\begin{align*}
r_{ X_{2} }^{ \{2\} } \circ g_{U,X}^{2} = g_{U}^{1} \circ r_{X}^{U} 
\qquad \mathrm{and} \qquad r_{ X_{2} }^{ \{1\} } \circ g_{U,X}^{2} = g_{ Y }^{1} \circ r_{X}^{Y}
\end{align*}
from $\catc(X)$ to $\catc(X_{1})$.  Consequently, the induced functors from $\mathfrak{K}\mathfrak{K}(X)$ to $\mathfrak{K}\mathfrak{K}(X_{1})$ will be equal.
\end{lemma}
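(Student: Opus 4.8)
The plan is to verify both identities directly, first on objects and then on morphisms, exploiting that the common target $\catc(X_{1})$ has a one-point base space. A $C^{*}$-algebra over $X_{1}$ is determined by its underlying algebra alone, since the only structure map $\mathrm{Prim}(\cdot)\to X_{1}$ is the constant one, and an $X_{1}$-equivariant morphism is just a $*$-homomorphism. Thus, after identifying the open point $\{2\}$ and the closed point $\{1\}$ of $X_{2}$ with $X_{1}$, it suffices to check that the two composites in each identity agree on underlying $C^{*}$-algebras and on underlying $*$-homomorphisms; no comparison of structure maps is needed once both sides have been pushed down to the one-point space.

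First I would evaluate both sides of the first identity on an object $(\mathfrak{A},\psi)$. Applying $g_{U,X}^{2}$ gives $(\mathfrak{A}, g_{U,X}^{2}\circ\psi)$, and since $(g_{U,X}^{2})^{-1}(\{2\})=U$, the ideal attached to the open point $\{2\}$ is $\{\mathfrak{p}: (g_{U,X}^{2}\circ\psi)(\mathfrak{p})\in\{2\}\}=\{\mathfrak{p}:\psi(\mathfrak{p})\in U\}=\mathfrak{A}(U)$; hence $r_{X_{2}}^{\{2\}}\circ g_{U,X}^{2}(\mathfrak{A},\psi)$ has underlying algebra $\mathfrak{A}(U)$. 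On the other route, $r_{X}^{U}(\mathfrak{A},\psi)=(\mathfrak{A}(U),\psi|_{\psi^{-1}(U)})$ and the forgetful functor $g_{U}^{1}$ leaves $\mathfrak{A}(U)$ unchanged, so the underlying algebras coincide. For the second identity the same bookkeeping applies with the closed point: since $(g_{U,X}^{2})^{-1}(\{1\})=X\setminus U=Y$ and $\{1\}=X_{2}\setminus\{2\}$ is locally closed, $r_{X_{2}}^{\{1\}}\circ g_{U,X}^{2}(\mathfrak{A},\psi)$ has underlying algebra $\mathfrak{A}/\mathfrak{A}(U)=\mathfrak{A}(Y)$, which is precisely the underlying algebra of $g_{Y}^{1}\circ r_{X}^{Y}(\mathfrak{A},\psi)$. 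On morphisms, an $X$-equivariant $\phi:\mathfrak{A}\to\mathfrak{B}$ is sent by $g_{U,X}^{2}$ to itself, now $X_{2}$-equivariant because $\phi(\mathfrak{A}(U))\subseteq\mathfrak{B}(U)$; restriction to $\{2\}$ returns the induced map $\phi_{U}:\mathfrak{A}(U)\to\mathfrak{B}(U)$, which is exactly what $g_{U}^{1}\circ r_{X}^{U}$ yields, and the closed-point case is identical with $\mathfrak{A}/\mathfrak{A}(U)\to\mathfrak{B}/\mathfrak{B}(U)$. This establishes the two equalities of functors $\catc(X)\to\catc(X_{1})$.

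Finally, for the consequence on the induced functors $\mathfrak{K}\mathfrak{K}(X)\to\mathfrak{K}\mathfrak{K}(X_{1})$, I would invoke the functoriality recorded in Proposition 3.4 of \cite{rmrn:bootstrap}: the passage from a $\catc$-functor to its $\mathfrak{K}\mathfrak{K}$-counterpart respects composition, so each $\mathfrak{K}\mathfrak{K}$-composite is the $\mathfrak{K}\mathfrak{K}$-functor induced by the corresponding $\catc$-composite. Since the two $\catc$-composites have just been shown to be literally equal, their $\mathfrak{K}\mathfrak{K}$-extensions coincide. The proof is largely bookkeeping, and I expect the only genuine points requiring care to be, first, the correct handling of the identifications of the subquotient spaces $\{2\}$ and $\{1\}$ of $X_{2}$ with $X_{1}$ — ensuring the restriction functors' structure maps are those induced by $\psi$ and are discarded correctly over the one-point space — and second, that the induced-functor statement rests on genuine compatibility of the $\mathfrak{K}\mathfrak{K}$-extension with composition rather than a mere object-by-object definition, which is exactly what Proposition 3.4 supplies.
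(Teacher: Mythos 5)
Your proposal is correct and follows essentially the same route as the paper: a direct verification that both composites agree on objects (both yielding $\mathfrak{A}(U)$, respectively $\mathfrak{A}(Y)=\mathfrak{A}/\mathfrak{A}(U)$) and on morphisms (both yielding $\phi_{U}$, respectively the induced quotient map), with the $\mathfrak{K}\mathfrak{K}$-level consequence coming from Proposition 3.4 of \cite{rmrn:bootstrap}. Your added care about structure maps over the one-point space and about compatibility of the induced functors with composition only makes explicit what the paper leaves implicit.
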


\begin{proof}
Let $\mathfrak{A}$ be a $C^{*}$-algebra over $X$.  Then 
\begin{equation*}
r_{ X_{2} }^{ \{ 2 \} } \circ g_{U,X}^{2} ( \mathfrak{A} ) = \mathfrak{A} ( U ) 
\end{equation*}
and
\begin{equation*}
g_{U}^{1} \circ r_{X}^{U} ( \mathfrak{A} ) = g_{U}^{1} ( \mathfrak{A} ( U ) ) = \mathfrak{A} ( U ).
\end{equation*}

Suppose $\mathfrak{B}$ is a $C^{*}$-algebra over $X$ and $\ftn{ \phi }{ \mathfrak{A} }{ \mathfrak{B} }$ is an $X$-equivariant homomorphism.  Then 
\begin{equation*}
r_{X_{2}}^{ \{2\} } \circ g_{U,X}^{2} ( \phi ) = \phi_{U}
\end{equation*}
and
\begin{equation*}
g_{U}^{1} \circ r_{X}^{U} ( \phi ) = g_{U}^{1} ( \phi_{U} ) = \phi_{U}.
\end{equation*}
Therefore, $r_{ X_{2} }^{ \{2\} } \circ g_{U,X}^{2} = g_{U}^{1} \circ r_{X}^{U}$.

Similar computation shows that $r_{ X_{2} }^{ \{1\} } \circ g_{U,X}^{2} = g_{ Y }^{1} \circ r_{X}^{Y}$.
\end{proof}

The following theorem is a generalization of Theorem 2.3 of \cite{ERRshift}.  This is the key technical theorem of the paper.

\begin{theor}\label{thm:intkkel}
Let $\mathfrak{A}_{1}$ and $\mathfrak{A}_{2}$ be separable, nuclear $C^{*}$-algebras over $X_{2}$.  Suppose $\mathfrak{e}_{i} : \ 0 \to \mathfrak{A}_{i} [ 2 ] \to \mathfrak{A}_{i} \to \mathfrak{A}_{i} [1 ] \to 0$ is an essential extension for $i = 1, 2$.  If $x \in \kk (X_{2}; \mathfrak{A}_{1} , \mathfrak{A}_{2} )$, then 
\begin{equation*}
r_{X_2}^{\{1\}} ( x ) \times [ \tau_{ \mathfrak{e}_{2} } ] = [ \tau_{ \mathfrak{e}_{1} } ] \times r_{X_{2}}^{ \{2\} } ( x )
\end{equation*}
in $\kk^{1} ( \mathfrak{A}_{1} [ 1 ]  , \mathfrak{A}_{2} [ 2 ] )$.
\end{theor}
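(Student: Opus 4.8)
The plan is to reduce the case of a general invertible $\kk(X_2;-,-)$-element to the case of an honest $X_2$-equivariant isomorphism between extensions whose ideal and quotient are Kirchberg algebras, where the required identity is already contained in the computation carried out in the proof of Lemma~\ref{lem:commdiags}. Throughout I would assume, after tensoring the whole picture with $\K$ (which changes neither the $\kk$-groups nor the Kasparov products in the statement, and preserves essentiality of the extensions), that each $\mathfrak{A}_{i}$ is stable; then $\mathfrak{A}_{i}[2]$ is stable and still carries a norm-full projection, so it falls within the hypotheses of Lemma~\ref{l:propermap}.

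First I would replace the ideals by Kirchberg algebras. Since $\mathfrak{A}_{i}[2]$ is stable, separable, nuclear, satisfies the UCT, and has a norm-full projection, Lemma~\ref{l:propermap} supplies a Kirchberg algebra $\mathfrak{B}_{i}$ and a proper injective homomorphism $\ftn{\phi_{i}}{\mathfrak{A}_{i}[2]}{\mathfrak{B}_{i}}$ inducing a $\kk$-equivalence. As $\phi_{i}$ is proper it extends to the multiplier algebras, so composing $\tau_{\mathfrak{e}_{i}}$ with the induced corona map yields an essential extension $\mathfrak{f}_{i} : 0 \to \mathfrak{B}_{i} \to \mathfrak{F}_{i} \to \mathfrak{A}_{i}[1] \to 0$ together with an $X_{2}$-equivariant map $\mathfrak{A}_{i} \to \mathfrak{F}_{i}$ whose ideal part is $\phi_{i}$ and whose quotient part is the identity. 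Both restrictions are $\kk$-equivalences, so this map is a $\kk(X_2;-,-)$-equivalence, and Lemma~\ref{lem:commdiags}, parts (1) and (2), shows that the desired identity for $x$ relative to $(\mathfrak{e}_{1},\mathfrak{e}_{2})$ holds if and only if the analogous identity holds for the transported class relative to $(\mathfrak{f}_{1},\mathfrak{f}_{2})$. An entirely parallel application of Lemma~\ref{l:extendmap} (whose hypothesis that the ideal be a Kirchberg algebra is now met) together with Lemma~\ref{lem:commdiags} then replaces the quotients $\mathfrak{A}_{i}[1]$ by Kirchberg algebras. After these reductions I am left with an invertible $x' \in \kk(X_{2};\mathfrak{F}_{1}',\mathfrak{F}_{2}')$, where each $\mathfrak{F}_{i}'$ is an essential extension of a Kirchberg algebra by a Kirchberg algebra, and it suffices to prove the identity for $x'$.

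Each $\mathfrak{F}_{i}'$ is separable, nuclear, satisfies the UCT, and is $\OO_{\infty}$-absorbing (pure infiniteness passing from ideal and quotient to the extension), and it is tight over $X_{2}$: its only ideals are $0$, the simple ideal, and the whole algebra, with the ideal corresponding to the open point $\{2\}$. Hence Kirchberg's classification theorem \cite{kirchpure} applies and realises the invertible class $x'$ by an $X_{2}$-equivariant isomorphism $\ftn{\Phi}{\mathfrak{F}_{1}'}{\mathfrak{F}_{2}'}$ with $\kk(X_{2};\Phi)=x'$. Since $\Phi_{\{2\}}$ is an isomorphism of ideals it extends to the multiplier algebras, so the intertwining relation $\kk(\Phi_{\{1\}}) \times [\tau_{\mathfrak{f}_{2}'}] = [\tau_{\mathfrak{f}_{1}'}] \times \kk(\Phi_{\{2\}})$ established at the beginning of the proof of Lemma~\ref{lem:commdiags} is precisely the required identity for $x'$, because $r_{X_{2}}^{\{1\}}(x')=\kk(\Phi_{\{1\}})$ and $r_{X_{2}}^{\{2\}}(x')=\kk(\Phi_{\{2\}})$. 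Transporting back through the reductions then yields the theorem.

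I expect the main obstacle to be the realisation step, namely verifying that the Kirchberg-by-Kirchberg extensions constructed genuinely lie in the scope of \cite{kirchpure} — strong pure infiniteness of the total algebra, tightness over $X_{2}$, and persistence of nuclearity and the UCT through both reductions — and, more mundanely but just as dangerously, keeping track of which restriction functor meets which Busby class so that Lemma~\ref{lem:commdiags} is applied with the correct roles of source and target at both the ideal and the quotient level. The essentiality of the intermediate extension $\mathfrak{f}_{i}$ (i.e.\ injectivity of the composed Busby map) is the other point that would need care.
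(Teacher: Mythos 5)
Your proposal follows essentially the same route as the paper's own proof: stabilize via Lemma \ref{lem:commdiags}, replace the ideal by a Kirchberg algebra using Lemma \ref{l:propermap} together with the Busby pushforward of \cite{ELPmorph}, replace the quotient using Lemma \ref{l:extendmap}, transport the desired identity through Lemma \ref{lem:commdiags}, and finally realise the resulting invertible class by an $X_{2}$-equivariant isomorphism via \cite{kirchpure}, whose intertwining of Busby invariants gives the conclusion. The one step you assert without justification---that invertibility of the two restrictions makes your comparison map a $\kk(X_{2};-,-)$-equivalence---is precisely where the paper applies the five lemma to $\mathrm{FK}_{X_{2}}$ and then Bonkat's UCT \cite{bonkat}, which is available here because all subquotients are separable, nuclear and satisfy the UCT.
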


\begin{proof}
Let $\catcn ( X_{2} )$ be the category whose objects are nuclear, separable $C^{*}$-algebras over $X_{2}$ and morphisms are $X_{2}$-equivariant homomorphisms.  Let $\kkcn ( X_{2} )$ be the category whose objects are nuclear, separable $C^{*}$-algebras over $X_{2}$ and the morphisms from $\mathfrak{A}$ to $\mathfrak{B}$ are the elements of $\kk ( X_{2} ; \mathfrak{A} , \mathfrak{B} )$.  Let $\kkcn$ be the category whose objects are nuclear, separable $C^{*}$-algebras and the morphisms from $\mathfrak{A}$ to $\mathfrak{B}$ are the elements of $\kk ( \mathfrak{A} , \mathfrak{B} )$.  

Let $\mathfrak{A}$ be in $\catcn(X_{2})$ and let $\pi_{ \mathfrak{A} }$ be the natural projection from $\mathfrak{A}$ to $\mathfrak{A}[1]$.  Let $\ftn{ i_{\mathfrak{A}} }{ S \mathfrak{A} }{ C_{ \pi_{\mathfrak{A} } } }$ and $\ftn{ j_{\mathfrak{A}} }{ \mathfrak{A} [2] }{ C_{ \pi_{\mathfrak{A}} } }$ be the natural embeddings, where 
\begin{align*}
C_{ \mathfrak{\pi}_{ \mathfrak{A} } } = \setof{ ( a, f ) \in \mathfrak{A} \oplus C_{0} ( (0,1] , \mathfrak{A}[1] ) }{ \pi_{ \mathfrak{A} } ( a) = f( 1) }
\end{align*} 
is the mapping cone of $\mathfrak{\pi}_{ \mathfrak{A} }$.  Recall that $\kk ( j_{\mathfrak{A}} )$ is invertible in $\kk ( \mathfrak{A}[2] , C_{ \pi_{ \mathfrak{A} } } )$.  Then the isomorphism from $\kk^{1} ( \mathfrak{A} [ 1 ] , \mathfrak{B} [ 2 ] )$ to $\kk ( S \mathfrak{A}[ 1 ] , \mathfrak{B} [ 2 ] )$ sends the class induced by 
\begin{align*}
0 \to \mathfrak{A} [2 ] \to \mathfrak{A} \to \mathfrak{A} [1] \to 0
\end{align*}
to $\kk ( i_{\mathfrak{A}} ) \times \kk ( j_{\mathfrak{A}} )^{-1}$ (see \cite{jcgs:mapcone} and Theorem 19.5.7 of \cite{blackadarB}).  Using this isomorphism from $\kk^{1} ( \mathfrak{A} [ 1 ] , \mathfrak{B} [ 2 ] )$ to $\kk ( S \mathfrak{A}[ 1 ] , \mathfrak{B} [ 2 ] )$, the equation
\begin{equation*}
r_{X_2}^{\{1\}} ( x ) \times [ \tau_{ \mathfrak{e}_{2} } ] = [ \tau_{ \mathfrak{e}_{1} } ] \times r_{X_{2}}^{ \{2\} } ( x )
\end{equation*}
in $\kk^{1} ( \mathfrak{A}_{1} [ 1 ]  , \mathfrak{A}_{2} [ 2 ] )$ becomes
\begin{align*}
S r_{X_2}^{\{1\}} ( x ) \times \kk ( i_{\mathfrak{A}_{2}} ) \times \kk ( j_{\mathfrak{A}_{2}} )^{-1}  = \kk ( i_{\mathfrak{A}_{1}} ) \times \kk ( j_{\mathfrak{A}_{1}} )^{-1} \times r_{X_{2}}^{ \{2\} } ( x )
\end{align*}
in $\kk ( S \mathfrak{A}[ 1 ] , \mathfrak{B} [ 2 ] )$, where $S$ is the natural isomorphism from $\kk ( \mathfrak{A}, \mathfrak{B} )$ to $\kk ( S \mathfrak{A} , S \mathfrak{B} )$.

Let $m = 1$ or $2$.  Define $\ftn{ F_{m} }{ \catcn }{ \kkcn }$ by 
\begin{align*}
F_{m} ( \mathfrak{A} ) =
\begin{cases}
 \mathfrak{A} [ 2 ], &\text{$m =2$} \\
 S \mathfrak{A} [1], &\text{$m=1$}
\end{cases} 
\quad \text{and} \quad
F_{m} ( \phi ) =
\begin{cases}
 \phi_{ \{ 2 \} }, &\text{$m =2$} \\
 S \phi_{ \{ 1 \} }, &\text{$m=1$}
\end{cases} 
\end{align*}
We claim that $\ftn{ \eta }{ F_{1}}{ F_{2} }$ given by $\eta_{ \mathfrak{A} } = \kk ( i_{\mathfrak{A}} ) \times \kk ( j_{ \mathfrak{A} } )^{-1}$ is a natural transformation between the functors $F_{1}$ and $F_{2}$.  Let $\mathfrak{A}$ and $\mathfrak{B}$ be in $\catcn(X_{2})$ and let $\phi$ be an $X_{2}$-equivariant homomorphism.  By the definition of the mapping cone sequence, there exists a homomorphism $\ftn{ \psi }{ C_{ \pi_{ \mathfrak{A} } } }{  C_{ \pi_{ \mathfrak{B} } } }$ such that the diagrams
\begin{align*}
\xymatrix{
0 \ar[r] & S \mathfrak{A}[1] \ar[r]^{ i_{ \mathfrak{A} } } \ar[d]^{ S \phi_{ \{1\} } } & C_{ \pi_{ \mathfrak{A} } } \ar[r] \ar[d]^{\psi} & \mathfrak{A} \ar[r] \ar[d]^{ \phi } & 0 \\ 
0 \ar[r] & S \mathfrak{B}[1] \ar[r]_{ i_{ \mathfrak{B} } } & C_{ \pi_{ \mathfrak{B} } } \ar[r] & \mathfrak{B} \ar[r] & 0
}
\end{align*}
and
\begin{align*}
\xymatrix{
0 \ar[r] & \mathfrak{A}[2] \ar[r]^{ j_{ \mathfrak{A} } } \ar[d]^{ \phi_{ \{2\} } } & C_{ \pi_{ \mathfrak{A} } } \ar[r] \ar[d]^{\psi} & C \mathfrak{A} \ar[r] \ar[d]^{ C \phi } & 0 \\ 
0 \ar[r] & \mathfrak{B}[1] \ar[r]_{ j_{ \mathfrak{B} } } & C_{ \pi_{ \mathfrak{B} } } \ar[r] & C \mathfrak{B} \ar[r] & 0
}
\end{align*}
are commutative.  Thus, 
\begin{align*}
F_{1} ( \phi ) \times \kk ( i_{ \mathfrak{B} } ) \times \kk ( j_{ \mathfrak{B} } )^{-1} &=  \kk ( S \phi_{ \{1\} } ) \times \kk ( i_{ \mathfrak{B} } ) \times \kk ( j_{ \mathfrak{B} } )^{-1} \\
				 &= \kk ( i_{ \mathfrak{A} } ) \times \kk ( \psi ) \times \kk ( j_{ \mathfrak{B} } )^{-1} \\
				&= \kk ( i_{ \mathfrak{A} } ) \times \kk ( j_{ \mathfrak{A} } )^{-1} \times \kk ( \phi_{\{2\}} ). \\
				&= \kk ( i_{ \mathfrak{A} } ) \times \kk ( j_{ \mathfrak{A} } )^{-1}  \times F_{2} ( \phi ). 
\end{align*}
Hence, $\ftn{ \eta }{ F_{1} }{ F_{2} }$ is a natural transformation.  

Since $F_{1}$, $F_{2}$ are stable, split exact, and homotopy invariant functors, by the universal property of $\kk$, we have that $F_{m}$ induces a functor $\ftn{ \overline{F}_{m} }{ \kkcn ( X_{2} ) }{ \kkcn }$ and $\eta$ induces a natural transformation $\ftn{ \overline{ \eta } }{ \overline{F}_{1} }{ \overline{F}_{2} }$.  In particular, for each $x \in \kk ( X_{2} ; \mathfrak{A}_{1} , \mathfrak{A}_{2} )$, we have that 
\begin{align*}
S r_{X_2}^{\{1\}} ( x ) \times \kk ( i_{\mathfrak{A}_{2} } ) \times \kk ( j_{\mathfrak{A}_{2}} )^{-1}  = \kk ( i_{\mathfrak{A}_{1}} ) \times \kk ( j_{\mathfrak{A}_{1}} )^{-1} \times r_{X_{2}}^{ \{2\} } ( x ).
\end{align*}

By the comments made in the second paragraph of the proof, we have that 
\begin{equation*}
r_{X_2}^{\{1\}} ( x ) \times [ \tau_{ \mathfrak{e}_{2} } ] = [ \tau_{ \mathfrak{e}_{1} } ] \times r_{X_{2}}^{ \{2\} } ( x )
\end{equation*}
in $\kk^{1} ( \mathfrak{A}_{1} [ 1 ]  , \mathfrak{A}_{2} [ 2 ] )$.
\end{proof}

\section{Classification}\label{class}

In this section we prove general classification results for several classes of extension algebras.  The result of this section is a generalization of the classification results obtained by the authors in \cite{ERRshift}.

\begin{defin}\label{def:class}
For a $T_{0}$ topological space $X$, we will consider classes $\mathcal{C}_{X}$ of separable, nuclear $C^{*}$-algebras in the bootstrap category of Rosenberg and Schochet $\mathcal{N}$ such that 
\begin{itemize}
\item[(1)] any element in $\mathcal{C}_{X}$ is a $C^{*}$-algebra over $X$;

\item[(2)] if $\mathfrak{A}$ and $\mathfrak{B}$ are in $\mathcal{C}_{X}$ and there exists an invertible element $\alpha$ in $\kk ( X ; \mathfrak{A}, \mathfrak{B} )$ which induces an isomorphism from $\mathrm{FK}_{X}^{+}( \mathfrak{A} )$ to $\mathrm{FK}_{X}^{+}(B)$, then there exists an isomorphism $\ftn{ \phi }{ \mathfrak{A} }{ \mathfrak{B} }$ such that $\kk ( \phi ) = g_{X}^{1}( \alpha )$.
\end{itemize} 
\end{defin}

We concentrate on the following two classes satisfying (1)--(3) above:

\begin{examp}\label{piex}
Let $\mathfrak{A}$ and $\mathfrak{B}$ be separable, nuclear, stable, $\mathcal{O}_{\infty}$-absorbing tight $C^{*}$-algebras over $X$.  Let $\alpha$ be an invertible element in $\kk ( X ; \mathfrak{A} , \mathfrak{B} )$.  By Kirchberg \cite{kirchpure}, there exists an isomorphism $\ftn{ \phi }{ \mathfrak{A} }{ \mathfrak{B} }$ such that $\kk( X ; \phi ) = \alpha$.     Hence, $\kk ( \phi ) = g_{ X }^{1} ( \kk ( X ; \phi ) ) =  \kk ( \alpha )$.  Thus, if $\mathcal{C}_{X}$ is the class of all stable, separable, nuclear $C^{*}$-algebras over $X$ which are $\mathcal{O}_{\infty}$-absorbing in $\mathcal{N}$, then $\mathcal{C}_{X}$ satisfies the properties of Definition \ref{def:class}.
\end{examp}

\begin{examp}\label{afex}
Let $\mathfrak{A}$ and $\mathfrak{B}$ be stable AF algebras.  Let $\alpha$ be an invertible element in $\kk ( X_{1} ; \mathfrak{A} , \mathfrak{B} ) = \kk ( \mathfrak{A} , \mathfrak{B} )$ which induces an isomorphism from $\mathrm{FK}_{X_{1}}^{+} ( \mathfrak{A} ) = ( K_{0} ( \mathfrak{A} ) , K_{0} ( \mathfrak{A} )_{+} )$ to $\mathrm{FK}_{X_{1}}^{+} ( \mathfrak{B} ) = ( K_{0} ( \mathfrak{B} ), K_{0} ( \mathfrak{B})_{+} )$.  Then by the classification of AF algebras \cite{af}, there exists an isomorphism $\ftn{ \phi }{ \mathfrak{A} } { \mathfrak{B} }$ such that $K_{0} ( \phi ) = K_{0} ( \alpha )$.  Since $\kk ( \mathfrak{A} , \mathfrak{B} ) \cong \Hom ( K_{0} ( \mathfrak{A} ), K_{0} ( \mathfrak{B})  )$, we have that $\kk ( \phi ) = \alpha$.  Thus, if $\mathcal{C}_{X_{1}}$ is the class of all stable, AF algebras, then $\mathcal{C}_{X_{1}}$ satisfies the properties of Definition \ref{def:class}.
\end{examp}

\begin{remar}
The condition
\begin{itemize}
\item[(2')] if $\mathfrak{A}$ and $\mathfrak{B}$ are in $\mathcal{C}_{X}$ and there exists an isomorphism $\beta$ from $\mathrm{FK}_{X}^{+}( \mathfrak{A} )$ to $\mathrm{FK}_{X}^{+}(B)$, then there exists an isomorphism $\ftn{ \phi }{ \mathfrak{A} }{ \mathfrak{B} }$ such that $\phi_* = \beta$.
\end{itemize} 
is more closely suited to our purposes, and (1),(2') is true in general in Example \ref{afex}, but not always in Example \ref{piex}, as pointed out in \cite{rmrn:uctkkx}. In fact, there exists a space $X$ with four points such that (3') fails in Example \ref{piex}.
\end{remar}

\begin{lemma}\label{lem:class}
For $i = 1,2$, let $\mathfrak{e}_{i} : \ 0 \to \mathfrak{I}_{i} \to \mathfrak{E}_{i} \to \mathfrak{A}_{i} \to 0$ be non-unital full extensions.  Suppose $\mathfrak{I}_{i}$ is a stable $C^{*}$-algebra satisfying the corona factorization property.  If there exist an isomorphism $\ftn{ \phi_{0} }{ \mathfrak{I}_{1} }{ \mathfrak{I}_{2} }$ and an isomorphism $\ftn{ \phi_{2} }{ \mathfrak{A}_{1} }{ \mathfrak{A}_{2} }$ such that $\kk ( \phi_{2} ) \times [ \tau_{ \mathfrak{e}_{2} } ] = [ \tau_{ \mathfrak{e}_{1} } ] \times \kk ( \phi_{0} )$, then $\mathfrak{E}_{1}$ is isomorphic to $\mathfrak{E}_{2}$.\footnote{This lemma is not correct as stated.  See arXiv:1505.05951 for the corrected version of Lemma 4.5.  As noted in arXiv:1505.05951, the main results of this paper are true verbatim.} 
\end{lemma}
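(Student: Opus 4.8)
The plan is to use the isomorphisms $\phi_{0}$ and $\phi_{2}$ to transport $\mathfrak{e}_{1}$ onto an extension of $\mathfrak{A}_{2}$ by $\mathfrak{I}_{2}$, and then to recognize that this transported extension is strongly unitarily equivalent to $\mathfrak{e}_{2}$. Since $\phi_{0}$ is an isomorphism, it extends to an isomorphism $\multialg{\mathfrak{I}_{1}} \to \multialg{\mathfrak{I}_{2}}$ and hence descends to an isomorphism $\ftn{\widetilde{\phi}_{0}}{\corona{\mathfrak{I}_{1}}}{\corona{\mathfrak{I}_{2}}}$. I would set $\tau' = \widetilde{\phi}_{0} \circ \tau_{\mathfrak{e}_{1}} \circ \phi_{2}^{-1}$ and let $\mathfrak{e}' : 0 \to \mathfrak{I}_{2} \to \mathfrak{E}' \to \mathfrak{A}_{2} \to 0$ be the extension with Busby invariant $\tau'$. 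Because $\tau' \circ \phi_{2} = \widetilde{\phi}_{0} \circ \tau_{\mathfrak{e}_{1}}$, the pair $(\phi_{0}, \phi_{2})$ intertwines $\mathfrak{e}_{1}$ and $\mathfrak{e}'$ and therefore induces an isomorphism $\mathfrak{E}_{1} \cong \mathfrak{E}'$ of the middle algebras. Moreover, since $\widetilde{\phi}_{0}$ and $\phi_{2}^{-1}$ are isomorphisms and $\tau_{\mathfrak{e}_{1}}$ is full and non-unital, the extension $\mathfrak{e}'$ is again full and non-unital.

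The next step is to show that $[\tau'] = [\tau_{\mathfrak{e}_{2}}]$ in $\kk^{1}(\mathfrak{A}_{2}, \mathfrak{I}_{2})$. The fact that $(\phi_{0}, \phi_{2})$ is an isomorphism of extensions gives, exactly as in the computation of Lemma \ref{lem:commdiags}, the relation $\kk(\phi_{2}) \times [\tau'] = [\tau_{\mathfrak{e}_{1}}] \times \kk(\phi_{0})$, so that $[\tau'] = \kk(\phi_{2}^{-1}) \times [\tau_{\mathfrak{e}_{1}}] \times \kk(\phi_{0})$. Substituting the hypothesis $[\tau_{\mathfrak{e}_{1}}] \times \kk(\phi_{0}) = \kk(\phi_{2}) \times [\tau_{\mathfrak{e}_{2}}]$ and using $\kk(\phi_{2}^{-1}) \times \kk(\phi_{2}) = \kk(\id_{\mathfrak{A}_{2}})$, I obtain $[\tau'] = [\tau_{\mathfrak{e}_{2}}]$.

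Finally, both $\tau'$ and $\tau_{\mathfrak{e}_{2}}$ are Busby invariants of full, non-unital extensions of $\mathfrak{A}_{2}$ by the stable $C^{*}$-algebra $\mathfrak{I}_{2}$, which has the corona factorization property. That property guarantees that such full extensions are absorbing, and that two absorbing extensions with the same class in $\kk^{1}$ are strongly unitarily equivalent; hence there is a unitary $u \in \multialg{\mathfrak{I}_{2}}$ with $\tau_{\mathfrak{e}_{2}} = \mathrm{Ad}(\pi(u)) \circ \tau'$, where $\ftn{\pi}{\multialg{\mathfrak{I}_{2}}}{\corona{\mathfrak{I}_{2}}}$ is the quotient map. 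This yields an isomorphism $\mathfrak{E}' \cong \mathfrak{E}_{2}$, and combining with $\mathfrak{E}_{1} \cong \mathfrak{E}'$ gives the claim. The main obstacle is precisely this last step: upgrading equality in $\kk^{1}$ to strong unitary equivalence of the Busby invariants, which is where fullness and the corona factorization property enter (generalizing, via the Kucerovsky--Ng absorption theory, the purely infinite simple case of Theorem 3.2 of \cite{NgCFP} used in Lemma \ref{l:extendmap}).
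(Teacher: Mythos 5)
Your proposal is correct and takes essentially the same route as the paper: the paper forms the push-out $\mathfrak{e}_{1}\cdot\phi_{0}$ and the pull-back $\phi_{2}\cdot\mathfrak{e}_{2}$ (both extensions of $\mathfrak{A}_{1}$ by $\mathfrak{I}_{2}$), checks their classes agree in $\kk^{1}(\mathfrak{A}_{1},\mathfrak{I}_{2})$ by the same Busby-invariant computation, and then invokes Theorem 3.2(2) of \cite{NgCFP} to produce the multiplier unitary. The only difference is cosmetic: you transport $\mathfrak{e}_{1}$ all the way to an extension of $\mathfrak{A}_{2}$ by $\mathfrak{I}_{2}$ and compare there, while the paper compares over $\mathfrak{A}_{1}$.
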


\begin{proof}
Note that $\mathfrak{e}_{1} \cong \mathfrak{e}_{1} \cdot \phi_{0}$ and $\mathfrak{e}_{2} \cong \phi_{2} \cdot \mathfrak{e}_{2}$, where $\mathfrak{e}_{1} \cdot \phi_{0}$ is the push-out of $\mathfrak{e}_{1}$ along $\phi_{0}$ and $\phi_{2} \cdot \mathfrak{e}_{2}$ is the pull-back of $\mathfrak{e}_{2}$ along $\phi_{2}$ (cf. \cite{extpurelyinf}).  Since $[ \tau_{ \mathfrak{e}_{1} \cdot \phi_{0} } ] = [ \tau_{ \mathfrak{e}_{1} } ] \times \kk ( \phi_{0} ) = \kk ( \phi_{2} ) \times [ \tau_{ \mathfrak{e}_{2} } ] = [ \tau_{ \phi_{2} \cdot \mathfrak{e}_{2} } ]$ in $\kk^{1} ( \mathfrak{A}_{1} , \mathfrak{I}_{2} )$, we have that $[ \tau_{ \mathfrak{e}_{1} \cdot \phi_{0} } ] = [ \tau_{ \phi_{2} \cdot \mathfrak{e}_{2} } ]$.  Since $\tau_{ \mathfrak{e}_{1} \cdot \phi_{0} }$ and $\tau_{ \phi_{2} \cdot \mathfrak{e}_{2} }$ are non-unital full extensions and $\mathfrak{I}_{2}$ satisfies the corona factorization property, by Theorem 3.2(2) of \cite{NgCFP}, there exists a unitary $u$ in $\mathcal{M}( \mathfrak{I}_{2} )$ such that $\mathrm{Ad} ( \pi ( u ) ) \circ \tau_{ \mathfrak{e}_{1} \cdot \phi_{0} } = \tau_{ \phi_{2} \cdot \mathfrak{e}_{2} }$.  Hence, $( \mathrm{Ad} ( u ), \mathrm{Ad} (u) , \id_{ \mathfrak{A}_{1} } )$ is an isomorphism between $\mathfrak{e}_{1} \cdot \phi_{0}$ and $\phi_{2} \cdot \mathfrak{e}_{2}$.  Thus, $\mathfrak{E}_{1}$ is isomorphic to $\mathfrak{E}_{2}$.
\end{proof}

We will apply the theorem below to a certain class of $C^{*}$-algebras arising from graphs.  See Proposition \ref{prop:classgraph1}, Corollary \ref{cor:classgraph1}, Proposition \ref{prop:classgraph2},  and Theorem \ref{thm:classgraph}.

\begin{theor}\label{thm:class1}
Let $X$ be a finite topological space and let $U \in \mathbb{O} ( X )$.  Set $Y = X \setminus U \in \mathbb{LC} (X)$.  For $i=1,2$, let $\mathfrak{A}_{i}$ be a $C^{*}$-algebra over $X$ such that $\mathfrak{A}_{i}$ is a stable, separable, nuclear \cstar-algebra and every simple sub-quotient of $\mathfrak{A}_{i}$ is in the bootstrap category $\mathcal{N}$.

Let $\mathcal{C}_{\mathcal{I} , U}$ and $\mathcal{C}_{\mathcal{Q} , Y}$ be classes of $C^{*}$-algebras that satisfy the properties of Definition \ref{def:class}.  Suppose $\mathfrak{A}_{i} (U) $ is a stable $C^{*}$-algebra in $\mathcal{C}_{\mathcal{I} , U}$ and satisfying the corona factorization property, and $\mathfrak{A}_{i} (Y)$ is a stable $C^{*}$-algebra in $\mathcal{C}_{\mathcal{Q} , Y}$.  Suppose
for $i=1,2$
\begin{align*}
\mathfrak{e}_{i} : \ 0 \to \mathfrak{A}_{i}(U) \to \mathfrak{A}_{i} \to \mathfrak{A}_{i} (Y) \to 0 
\end{align*}
are full extensions.  If there exists an isomorphism $\ftn{ \alpha }{ \mathrm{FK}_{X}( \mathfrak{A}_{1} ) }{ \mathrm{FK}_{X}( \mathfrak{A}_{2} ) }$ such that $\ftn{ \alpha_{U} }{ \mathrm{FK}_{U}^{+} ( \mathfrak{A}_{1} (U) ) }{ \mathrm{FK}_{U}^{+} ( \mathfrak{A}_{2} (U) )}$ and $\ftn{ \alpha_{Y} }{ \mathrm{FK}_{Y}^{+} ( \mathfrak{A}_{1} (Y) ) }{ \mathrm{FK}_{Y}^{+} ( \mathfrak{A}_{2} (Y) ) }$ are isomorphisms, and $\alpha$ lifts to an invertible element in $\kk ( X ; \mathfrak{A}_{1} , \mathfrak{A}_{2} )$, then  $\mathfrak{A}_{1} \cong \mathfrak{A}_{2}$. 
\end{theor}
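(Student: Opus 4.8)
The plan is to transfer the problem to the two-point space $X_{2}$, apply Theorem \ref{thm:intkkel} to produce an identity of Busby invariants, and then glue the two algebras together using Lemma \ref{lem:class}.

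First I would pass from $X$ to $X_{2}$ via the functor $g_{U,X}^{2}$, which realizes $\mathfrak{A}_{i}$ as a $C^{*}$-algebra over $X_{2}$ carrying precisely the extension $\mathfrak{e}_{i}$, so that $g_{U,X}^{2}(\mathfrak{A}_{i})[2] = \mathfrak{A}_{i}(U)$ and $g_{U,X}^{2}(\mathfrak{A}_{i})[1] = \mathfrak{A}_{i}(Y)$. Applying this functor to the given invertible $x \in \kk(X; \mathfrak{A}_{1}, \mathfrak{A}_{2})$ yields an invertible $g_{U,X}^{2}(x) \in \kk(X_{2}; g_{U,X}^{2}(\mathfrak{A}_{1}), g_{U,X}^{2}(\mathfrak{A}_{2}))$. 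I must then verify the hypotheses of Theorem \ref{thm:intkkel}: the algebras $\mathfrak{A}_{i}(U)$ and $\mathfrak{A}_{i}(Y)$ are separable and nuclear, being an ideal and a quotient of $\mathfrak{A}_{i}$, and they satisfy the UCT because every simple subquotient lies in $\mathcal{N}$ and the bootstrap class is closed under extensions; $\mathfrak{A}_{i}(U)$ contains a norm-full projection by assumption; and each $\mathfrak{e}_{i}$ is essential, since a full extension has injective Busby invariant. Theorem \ref{thm:intkkel} then gives
\[
r_{X_{2}}^{\{1\}}(g_{U,X}^{2}(x)) \times [\tau_{\mathfrak{e}_{2}}] = [\tau_{\mathfrak{e}_{1}}] \times r_{X_{2}}^{\{2\}}(g_{U,X}^{2}(x))
\]
in $\kk^{1}(\mathfrak{A}_{1}(Y), \mathfrak{A}_{2}(U))$.

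Next I would identify the two restricted classes using Lemma \ref{lem:commfunctor}: namely $r_{X_{2}}^{\{2\}} \circ g_{U,X}^{2} = g_{U}^{1} \circ r_{X}^{U}$ and $r_{X_{2}}^{\{1\}} \circ g_{U,X}^{2} = g_{Y}^{1} \circ r_{X}^{Y}$, so that the two classes above are the forgetful images of $x_{U} := r_{X}^{U}(x) \in \kk(U; \mathfrak{A}_{1}(U), \mathfrak{A}_{2}(U))$ and $x_{Y} := r_{X}^{Y}(x) \in \kk(Y; \mathfrak{A}_{1}(Y), \mathfrak{A}_{2}(Y))$. Since $x$ is invertible and lifts $\alpha$, the restrictions $x_{U}$ and $x_{Y}$ are invertible and induce $\alpha_{U}$ and $\alpha_{Y}$, which by hypothesis are isomorphisms of $\mathrm{FK}_{U}^{+}$ and $\mathrm{FK}_{Y}^{+}$, respectively. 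Applying property (3) of Definition \ref{def:class} to $\mathcal{C}_{\mathcal{I},U}$ (with $x_{U}$) and to $\mathcal{C}_{\mathcal{Q},Y}$ (with $x_{Y}$) then produces isomorphisms $\phi_{0} : \mathfrak{A}_{1}(U) \to \mathfrak{A}_{2}(U)$ and $\phi_{2} : \mathfrak{A}_{1}(Y) \to \mathfrak{A}_{2}(Y)$ with $\kk(\phi_{0}) = g_{U}^{1}(x_{U})$ and $\kk(\phi_{2}) = g_{Y}^{1}(x_{Y})$.

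Substituting these identities into the displayed equation turns it into $\kk(\phi_{2}) \times [\tau_{\mathfrak{e}_{2}}] = [\tau_{\mathfrak{e}_{1}}] \times \kk(\phi_{0})$, which is exactly the intertwining hypothesis of Lemma \ref{lem:class} with ideal $\mathfrak{A}_{i}(U)$, quotient $\mathfrak{A}_{i}(Y)$, and total algebra $\mathfrak{E}_{i} = \mathfrak{A}_{i}$. The extensions $\mathfrak{e}_{i}$ are full by assumption and non-unital since $\mathfrak{A}_{i}(Y)$ is stable, and the ideals $\mathfrak{A}_{i}(U)$ are stable and satisfy the corona factorization property; hence Lemma \ref{lem:class} yields $\mathfrak{A}_{1} \cong \mathfrak{A}_{2}$. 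The conceptual content is carried entirely by Theorem \ref{thm:intkkel} and Lemma \ref{lem:class}; the part requiring care—and the main obstacle in a full write-up—is the functorial bookkeeping of the previous paragraph, where one must be sure, via Lemma \ref{lem:commfunctor}, that the $\kk$-classes appearing in the conclusion of Theorem \ref{thm:intkkel} are precisely $\kk(\phi_{0})$ and $\kk(\phi_{2})$, so that the two results dovetail.
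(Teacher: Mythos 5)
Your proposal is correct and follows essentially the same route as the paper's proof: restrict the lifted invertible $\kk$-element to $U$ and $Y$, invoke Definition \ref{def:class}(3) to produce $\phi_{0}$ and $\phi_{2}$, combine Theorem \ref{thm:intkkel} with Lemma \ref{lem:commfunctor} to obtain $\kk(\phi_{2}) \times [\tau_{\mathfrak{e}_{2}}] = [\tau_{\mathfrak{e}_{1}}] \times \kk(\phi_{0})$, and conclude via Lemma \ref{lem:class}. If anything, you are slightly more careful than the paper in checking the hypotheses of Theorem \ref{thm:intkkel} (essentiality of $\mathfrak{e}_{i}$ from fullness, and the UCT from closure of $\mathcal{N}$ under extensions), which the paper leaves implicit.
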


\begin{proof}
Suppose there exists an isomorphism $\ftn{ \alpha }{ \mathrm{FK}_{X}( \mathfrak{A}_{1} ) }{ \mathrm{FK}_{X}( \mathfrak{A}_{2} ) }$ such that $\ftn{ \alpha_{U} }{ \mathrm{FK}_{U}^{+} ( \mathfrak{A}_{1} (U) ) }{ \mathrm{FK}_{U}^{+} ( \mathfrak{A}_{2} (U) ) }$ and $\ftn{ \alpha_{Y} }{ \mathrm{FK}_{Y}^{+} ( \mathfrak{A}_{1} (Y) ) }{ \mathrm{FK}_{Y}^{+} ( \mathfrak{A}_{2} (Y) ) }$ are isomorphisms, and $\alpha$ lifts to an invertible element in $\kk ( X ; \mathfrak{A}_{1} , \mathfrak{A}_{2} )$.  Let $x \in \kk (X; \mathfrak{A}_{1} , \mathfrak{A}_{2} )$ be this lifting.  Then  $r_{X}^{U}(x)$ is an invertible element in $\kk (U;  \mathfrak{A}_{1}(U) , \mathfrak{A}_{2}(U) )$ and $ r_{X}^{Y}(x)$ is an invertible element in $\kk( Y ; \mathfrak{A}_{1} (Y) , \mathfrak{A}_{2} (Y) )$.  Since $\mathfrak{A}_{1} (U)$ and $\mathfrak{A}_{2} (U)$ are in $\mathcal{C}_{\mathcal{I}, U }$ and $\mathfrak{A}_{1} (Y)$ and $\mathfrak{A}_{2} (Y)$ are in $\mathcal{C}_{\mathcal{Q} , Y}$, there exists an isomorphism $\ftn{ \phi_{0} }{ \mathfrak{A}_{1}(U) }{ \mathfrak{A}_{2}(U) }$ which induces $ r_{X}^{U}( x )$ and there exists an isomorphism $\ftn{ \phi_{2} }{ \mathfrak{A}_{1} (Y) }{ \mathfrak{A}_{2} (Y) }$ which induces $r_{X}^{Y}(x)$.  By Theorem \ref{thm:intkkel} and by Lemma \ref{lem:commfunctor} 
\begin{align*}
\kk ( \phi_{2} ) \times [ \tau_{ \mathfrak{e}_{2} } ]  &= ( g_{ Y }^{1} \circ r_{X}^{Y}(x) ) \times [ \tau_{ \mathfrak{e}_{2} } ] = (r_{X_{2}}^{\{1\}} \circ g_{U,X}^{2} (x) ) \times [ \tau_{ \mathfrak{e}_{2} } ] \\
 &= [ \tau_{ \mathfrak{e}_{1} } ] \times  (r_{ X_{2} }^{ \{2\} } \circ g_{U,X}^{2} )(x) = [ \tau_{ \mathfrak{e}_{1} } ] \times (g_{U}^{1} \circ r_{X}^{U}(x ) ) = [ \tau_{ \mathfrak{e}_{1} } ] \times \kk ( \phi_{0} )
\end{align*}
in $\kk^{1} ( \mathfrak{A}_{1} (Y) , \mathfrak{A}_{2}(U) )$.   Since $\mathfrak{e}_{1}$ and $ \mathfrak{e}_{2}$ are full non-unital extensions and $\mathfrak{A}_{i}(U)$ has the corona factorization property, by Lemma \ref{lem:class} we have that $\mathfrak{A}_{1} \cong \mathfrak{A}_{2}$.
\end{proof}

\section{Full Extensions}\label{full}

In this section, we prove that certain extensions arising from graph $C^{*}$-algebras are necessarily full, allowing one to use the results in Section \ref{class}. 

Let $\mathfrak{I}$ be an ideal of a $C^*$-algebra $\mathfrak{B}$.  Set 
\begin{equation*}
\multialg{ \mathfrak{B} ; \mathfrak{I} } = \setof{ x \in \multialg{ \mathfrak{B} } }{ x \mathfrak{B} \subseteq \mathfrak{I} }
\end{equation*}
It is easy to check that $\multialg{ \mathfrak{B} ; \mathfrak{I} }$ is a (norm-closed, two-sided) ideal of $\multialg{ \mathfrak{B} }$. 

\begin{defin}
Let $\{ f_{n} \}$ be an approximate identity consisting of projections for $\K$, where $f_{0} = 0$ and $f_{n} - f_{n-1}$ is a projection of dimension one.  Let $\mathfrak{A}$ be a unital \cstar-algebra and set $e_{n} =1_{ \mathfrak{A} } \otimes f_{n}$.  Note that $\{ e_{n} \}$ is an approximate identity of $\mathfrak{A} \otimes \K$ consisting of projections.

As in \cite{mr_ideal}, we call an element $X \in \multialg{ \mathfrak{A} \otimes \K }$ \emph{diagonal with respect to $\{ e_{n} \}$} if there exists a strictly increasing sequence $\{ \alpha ( n ) \}$ of integers with $\alpha ( 0 ) = 0$ such that 
\begin{equation*}
X ( e_{ \alpha(n) } - e_{ \alpha ( n - 1) } ) - ( e_{ \alpha(n) } - e_{ \alpha ( n - 1) } ) X = 0
\end{equation*}
for all $n \in \N$.  We write $X = \mathrm{diag} ( x_{1} , x_{2} , \dots )$, where
\begin{equation*}
x_{n} = X ( e_{ \alpha(n) } - e_{ \alpha ( n - 1) } )
\end{equation*}

Conversely, if $\{ x_{n} \}$ is a bounded sequence with $x_{n} \in \mathsf{M}_{ k_{n} } ( \mathfrak{A} )$, then upon identifying $\mathsf{M}_{ k_{n} } ( \mathfrak{A} )$ with
\begin{equation*}
( e_{ \alpha (n) } - e_{ \alpha ( n-1) } ) ( \mathfrak{A} \otimes \K ) ( e_{ \alpha (n) } - e_{ \alpha ( n-1) } )
\end{equation*}
for an appropriate $\alpha ( n )$, we have that $X = \mathrm{diag} ( x_{1} , x_{2} , \dots )$ for some $X \in \multialg{ \mathfrak{A} \otimes \K }$.
\end{defin}

Let $\epsilon > 0$.  Define $\ftn{ f_{\epsilon} }{ \R_{+} }{ \R_{+} }$ by 
\begin{equation*}
f_{\epsilon} ( t) =
\begin{cases}
0 	,&\text{if $t \leq \epsilon$} \\
\epsilon^{-1} ( t - \epsilon )   ,&\text{if $\epsilon \leq t \leq 2 \epsilon$} \\
1				,&\text{if $t \geq 2 \epsilon$}
\end{cases}
\end{equation*}

\begin{theor}\label{thm:fullness}
Let $\mathfrak{B}_{0}$ be a unital, $\mathcal{O}_{\infty}$-absorbing \cstar-algebra.  Let $\mathfrak{I}$ be the largest proper non-trivial ideal of $\mathfrak{B} = \mathfrak{B}_{0} \otimes \K$.  If $x \in \multialg{ \mathfrak{B}  }$ such that $x$ is not an element of $\multialg{ \mathfrak{B} ; \mathfrak{I} } + \mathfrak{B}$, then $\mathcal{I}(x) + \mathfrak{B}= \multialg{\mathfrak{B} }$, where $\mathcal{I} (x)$ is the norm-closed ideal of $\multialg{ \mathfrak{B} }$ generated by $x$. 

Consequently, every nonzero element $x \in \corona{ \mathfrak{B} }$ that is not an element of $\multialg{ \mathfrak{B} ; \mathfrak{I} } / \mathfrak{I}$ is norm-full in $\corona{ \mathfrak{B} } $.
\end{theor}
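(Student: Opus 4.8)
The plan is to deduce the ``consequently'' statement from the first one, and to prove the first statement by diagonalising $x$ modulo $\mathfrak{B}$ and reducing everything to a single \emph{uniform} fullness estimate inside the purely infinite subquotients.

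First I would record the algebraic picture. Every $m\in\multialg{\mathfrak{B}}$ satisfies $m\mathfrak{I}\subseteq\mathfrak{I}$, so it induces a multiplier of $\mathfrak{B}/\mathfrak{I}$; this gives a $*$-homomorphism $\rho\colon\multialg{\mathfrak{B}}\to\multialg{\mathfrak{B}/\mathfrak{I}}$ with $\ker\rho=\multialg{\mathfrak{B};\mathfrak{I}}$ and $\rho(\mathfrak{B})=\mathfrak{B}/\mathfrak{I}$. Since $\multialg{\mathfrak{B};\mathfrak{I}}\cap\mathfrak{B}=\mathfrak{I}$, the image of $\multialg{\mathfrak{B};\mathfrak{I}}$ in $\corona{\mathfrak{B}}$ is exactly $\multialg{\mathfrak{B};\mathfrak{I}}/\mathfrak{I}$, and $x\notin\multialg{\mathfrak{B};\mathfrak{I}}+\mathfrak{B}$ precisely when $\rho(x)$ is non-compact in $\multialg{\mathfrak{B}/\mathfrak{I}}$. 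Granting the first statement, the consequence follows at once: a nonzero $\dot x\in\corona{\mathfrak{B}}$ lying outside $\multialg{\mathfrak{B};\mathfrak{I}}/\mathfrak{I}$ lifts to some $x\notin\multialg{\mathfrak{B};\mathfrak{I}}+\mathfrak{B}$, and projecting the equality $\mathcal{I}(x)+\mathfrak{B}=\multialg{\mathfrak{B}}$ into $\corona{\mathfrak{B}}$ shows that the ideal generated by $\dot x$ is everything. For the first statement I would use that $\mathcal{I}(x)+\mathfrak{B}$ is a closed ideal of $\multialg{\mathfrak{B}}$ (a sum of two closed ideals), so that it suffices to produce a single \emph{invertible} element $Y\in\mathcal{I}(x)+\mathfrak{B}$.

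Next comes the structural reduction. Writing $\mathfrak{I}=\mathfrak{I}_0\otimes\K$, the quotient $\mathfrak{B}_0/\mathfrak{I}_0$ is unital, simple and $\OO_\infty$-absorbing, hence purely infinite simple, and $\mathfrak{B}_0$ is itself purely infinite. Using the cut-off functions $f_\epsilon$ and the diagonal framework introduced above, I would first replace $x$ by a block-diagonal element modulo $\mathfrak{B}$: for a suitable approximate unit of projections one arranges $x\equiv x'\pmod{\mathfrak{B}}$ with $x'=\operatorname{diag}(x_1,x_2,\dots)$ and $x_n\in M_{k_n}(\mathfrak{B}_0)$. Because $\rho(x)$ is non-compact, $\limsup_n\lVert x_n+M_{k_n}(\mathfrak{I}_0)\rVert\ge\delta$ for some $\delta>0$, so infinitely many blocks are ``good''. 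Regrouping consecutive blocks so that each group contains at least one good block, I may assume that \emph{every} block satisfies $\lVert x_n+M_{k_n}(\mathfrak{I}_0)\rVert\ge\delta$; this sidesteps the ideal $\multialg{\mathfrak{B};\mathfrak{I}}$ entirely. The decisive use of the hypothesis is that $M_{k_n}(\mathfrak{I}_0)$ is the \emph{largest} proper ideal of $M_{k_n}(\mathfrak{B}_0)$, so a good block $x_n$, not lying in it, is automatically norm-full in the purely infinite algebra $M_{k_n}(\mathfrak{B}_0)$.

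Finally I must turn fullness of each $x_n$ into the identity \emph{uniformly in $n$}: find $N$ and a bound $M$, independent of $n$, and $s_{i,n},t_{i,n}\in M_{k_n}(\mathfrak{B}_0)$ of norm at most $M$ with $\bigl\lVert\sum_{i=1}^{N}s_{i,n}x_nt_{i,n}-1_{M_{k_n}(\mathfrak{B}_0)}\bigr\rVert<\tfrac12$. The uniform bound is the main obstacle: fullness alone yields such a combination for each fixed $n$, but with constants that could grow with $k_n$. This is exactly where $\OO_\infty$-absorption is essential, as it lets one control the pure-infiniteness comparison constants by the distance $\delta$ to the maximal ideal together with $\lVert x_n\rVert\le\lVert x\rVert$, uniformly in the block size; concretely, cutting $\lvert x_n\rvert$ down by $f_{\delta/2}$ produces a full, properly infinite positive contraction dominating the unit with controlled constants, which supplies the $s_{i,n},t_{i,n}$. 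Assembling the bounded block-diagonal multipliers $s_i=\operatorname{diag}_{n}(s_{i,n})$, $t_i=\operatorname{diag}_{n}(t_{i,n})\in\multialg{\mathfrak{B}}$ gives $Y:=\sum_{i=1}^{N}s_ix't_i$ with $\lVert Y-1\rVert\le\tfrac12$, so $Y$ is invertible in $\multialg{\mathfrak{B}}$. Since $x-x'\in\mathfrak{B}$, we have $\sum_i s_ix t_i=Y-C$ with $C\in\mathfrak{B}$, so the invertible $Y=\sum_i s_ix t_i+C$ lies in $\mathcal{I}(x)+\mathfrak{B}$, forcing $\mathcal{I}(x)+\mathfrak{B}=\multialg{\mathfrak{B}}$.
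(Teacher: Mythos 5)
Your proposal is correct in outline, and its first half is essentially the paper's own argument: you diagonalise modulo $\mathfrak{B}$ (the paper quotes Proposition 2.8(i) of \cite{mr_ideal} for exactly this), and you use maximality of $\mathfrak{I}$ in the same way, namely that any block whose distance to $M_{k_n}(\mathfrak{I}_0)$ is at least $\delta$ is automatically norm-full in its corner. Your uniformisation of $\delta$ (via $\limsup$, i.e.\ via $\dist(x,\multialg{\mathfrak{B};\mathfrak{I}}+\mathfrak{B})>0$) and the regrouping step are fine, and are in fact more explicit than the paper; so is your deduction of the ``consequently'' clause, which the paper leaves to the reader. The divergence is in the endgame. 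The paper turns fullness of each grouped block into an exact factorization $z\bigl(\sum_k f_\delta(x_k)\bigr)z^*=e_{\alpha(m')}-e_{\alpha(m-1)}$ by citing Proposition 2.2 of \cite{HL_fullext}, and then feeds these factorizations into Corollary 2.7 of \cite{mr_ideal}, which performs the infinite assembly; all questions about norm bounds on the witnesses $z$ are absorbed into those citations, and the argument even yields the stronger conclusion $\mathcal{I}(x)=\multialg{\mathfrak{B}}$. You instead assemble by hand, producing an invertible element of $\mathcal{I}(x)+\mathfrak{B}$ from uniformly bounded block witnesses. That is a legitimate and more self-contained route, but it stands or falls with the uniform bound, and that is the one place where you assert rather than prove.

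Concretely: pure infiniteness of $M_{k_n}(\mathfrak{B}_0)$ (from $\OO_\infty$-absorption) plus fullness of $d_n:=f_{\delta/2}(|x_n|)$ gives, by Kirchberg--R{\o}rdam, some $r_n$ with $r_nd_nr_n^*=1$; but neither fullness nor proper infiniteness bounds $\|r_n\|$ in terms of $\delta$ and $\|x\|$ --- Cuntz-comparison witnesses carry no a priori norm control --- so the phrase ``with controlled constants'' is precisely the missing step. It can be supplied by a two-cutoff (plateau) argument: $w_n:=d_n^{1/2}r_n^*$ is an isometry, since $w_n^*w_n=r_nd_nr_n^*=1$, with $\|w_n\|=1$ and range projection $p_n:=w_nw_n^*\in\her(d_n)$; because $f_{\delta/4}(|x_n|)$ acts as a unit on $\her(d_n)$ and $|x_n|\geq\tfrac{\delta}{4}f_{\delta/4}(|x_n|)$, one gets $|x_n|\geq\tfrac{\delta}{4}p_n$, hence $w_n^*|x_n|w_n\geq\tfrac{\delta}{4}$, and then $z_n:=(w_n^*|x_n|w_n)^{-1/2}w_n^*$ satisfies $z_n|x_n|z_n^*=1$ with $\|z_n\|\leq 2/\sqrt{\delta}$, uniformly in $n$. (Also replace $x$ by $x^*x$ at the outset so that functional calculus is applied to positive blocks; this is harmless since $\mathcal{I}(x^*x)\subseteq\mathcal{I}(x)$ and $x^*x\notin\multialg{\mathfrak{B};\mathfrak{I}}+\mathfrak{B}$.) With this lemma inserted, your assembly of the block-diagonal multipliers and the conclusion $\mathcal{I}(x)+\mathfrak{B}=\multialg{\mathfrak{B}}$ are correct; in effect the lemma is exactly what the paper outsources to Lin and R{\o}rdam, so you should either prove it as above or cite those results as the paper does.
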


\begin{proof}
First note that by the proof of Theorem 3.2 of \cite{mr_ideal}, $\multialg{ \mathfrak{B} ; \mathfrak{I} }$ is a proper ideal of $\multialg{ \mathfrak{B} }$.  Let $x \in \multialg{ \mathfrak{B}  } \setminus ( \multialg{ \mathfrak{B} ; \mathfrak{I} } + \mathfrak{B})$.  By Proposition 2.8 (i) of \cite{mr_ideal}, we may assume that $x = \diag ( x_{1} , x_{2} , \dots )$ with respect to $\{ e_{n} \}$, i.e.\ there exists a strictly increasing sequence of integers $\{ \alpha ( n ) \}$ with $\alpha ( 0 ) = 0$ such that $x = \sum_{  k = 1}^{\infty} x_{k}$, where $x_{k} \in ( e_{ \alpha ( k ) } - e_{ \alpha ( k - 1 ) } ) \mathfrak{B} ( e_{ \alpha ( k ) } - e_{ \alpha ( k - 1 ) } )$ and the sum converges in the strict topology.

Let $m \in \N$.  Since $x$ is not an element of $\multialg{ \mathfrak{B} ; \mathfrak{I} }$ and $x$ is not an element of $\mathfrak{B}$, there exists $m' \geq m$ such that $\sum_{ k = m }^{m'} x_{k}$ is not an element of $( e_{ \alpha(m') } - e_{ \alpha (m-1) } ) \mathfrak{I} ( e_{ \alpha(m') } - e_{ \alpha (m-1) } )$.   Hence, there exists $\delta > 0$ such that $\sum_{ k = m }^{m'} f_{ \delta } ( x_{k} )$ is not an element of $( e_{ \alpha(m') } - e_{ \alpha (m-1) } ) \mathfrak{I} ( e_{ \alpha(m') } - e_{ \alpha (m-1) } )$.  Therefore, $\sum_{ k = m }^{m'} f_{ \delta } ( x_{k} )$ is norm-full in $( e_{ \alpha(m')} - e_{ \alpha (m-1) } ) \mathfrak{B} (  e_{ \alpha(m')} - e_{ \alpha (m-1) } )$.  By Proposition 2.2 of \cite{HL_fullext}, there exists $z \in ( e_{ \alpha(m')} - e_{ \alpha (m-1) } ) \mathfrak{B} (  e_{ \alpha(m')} - e_{ \alpha (m-1) } )$ such that 
\begin{equation*}
e_{ \alpha(m') }  - e_{ \alpha (m-1) } = z \left( \sum_{ k = m }^{m'} f_{ \delta } ( x_{k} ) \right) z^{*}
\end{equation*}
Therefore,
\begin{equation*}
1_{ \mathfrak{B}_{0} } \leq e_{ \alpha(m') }  - e_{ \alpha (m-1) } = z \left( \sum_{ k = m }^{m'} f_{ \delta } ( x_{k} ) \right) z^{*}
\end{equation*}

By Corollary 2.7 of \cite{mr_ideal}, $\mathcal{I} ( x ) = \multialg{ \mathfrak{B} }$.
\end{proof}

\begin{corol}\label{cor:fullness}
Let $\mathfrak{B}_{0}$ be a unital, $\mathcal{O}_{\infty}$-absorbing \cstar-algebra.  Let $\mathfrak{I}$ be the largest non-trivial proper ideal of $\mathfrak{B} = \mathfrak{B}_{0} \otimes \K$.  Suppose $\mathfrak{B}$ is an ideal of $\mathfrak{A}$ such that $\mathfrak{e}' : 0 \to \mathfrak{B} / \mathfrak{I} \to \mathfrak{A} / \mathfrak{I} \to \mathfrak{A} / \mathfrak{B} \to 0$ is an essential extension.  Then the extension $\mathfrak{e} : 0 \to \mathfrak{B} \to \mathfrak{A} \to \mathfrak{A} / \mathfrak{B} \to 0$ is a full extension.
\end{corol}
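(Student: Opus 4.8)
The plan is to verify the defining condition of a full extension directly in the Busby picture, reducing everything to the ``consequently'' part of Theorem \ref{thm:fullness}. Write $\tau_{\mathfrak{e}} : \mathfrak{A}/\mathfrak{B} \to \corona{\mathfrak{B}}$ and $\tau_{\mathfrak{e}'} : \mathfrak{A}/\mathfrak{B} \to \corona{\mathfrak{B}/\mathfrak{I}}$ for the Busby invariants of $\mathfrak{e}$ and $\mathfrak{e}'$, using the canonical identification $(\mathfrak{A}/\mathfrak{I})/(\mathfrak{B}/\mathfrak{I}) \cong \mathfrak{A}/\mathfrak{B}$. By Definition \ref{fullext} I must show that $\tau_{\mathfrak{e}}(a)$ is norm-full in $\corona{\mathfrak{B}}$ for every nonzero $a \in \mathfrak{A}/\mathfrak{B}$, and the theorem tells me this holds as soon as $\tau_{\mathfrak{e}}(a)$ is a nonzero element of $\corona{\mathfrak{B}}$ lying outside the image of $\multialg{\mathfrak{B};\mathfrak{I}}$.

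The key is to identify that image as the kernel of a natural corona map. Since every multiplier of $\mathfrak{B}$ carries the ideal $\mathfrak{I}$ into itself, the quotient map $q : \mathfrak{B} \to \mathfrak{B}/\mathfrak{I}$ extends canonically to a unital $*$-homomorphism $\bar{q} : \multialg{\mathfrak{B}} \to \multialg{\mathfrak{B}/\mathfrak{I}}$ determined by $\bar{q}(m)(b + \mathfrak{I}) = mb + \mathfrak{I}$. By definition $\bar{q}(m) = 0$ precisely when $m\mathfrak{B} \subseteq \mathfrak{I}$, so $\Ker \bar{q} = \multialg{\mathfrak{B};\mathfrak{I}}$. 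As $\bar{q}(\mathfrak{B}) = \mathfrak{B}/\mathfrak{I}$, the map $\bar{q}$ descends to $\rho : \corona{\mathfrak{B}} \to \corona{\mathfrak{B}/\mathfrak{I}}$, and a short diagram chase gives $\Ker \rho = (\multialg{\mathfrak{B};\mathfrak{I}} + \mathfrak{B})/\mathfrak{B}$, i.e.\ exactly the image of $\multialg{\mathfrak{B};\mathfrak{I}}$ in $\corona{\mathfrak{B}}$.

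Next I would establish the naturality relation $\tau_{\mathfrak{e}'} = \rho \circ \tau_{\mathfrak{e}}$. Let $\sigma : \mathfrak{A} \to \multialg{\mathfrak{B}}$ and $\sigma' : \mathfrak{A}/\mathfrak{I} \to \multialg{\mathfrak{B}/\mathfrak{I}}$ be the canonical maps arising from the ideal inclusions. Evaluating on elements of $\mathfrak{B}/\mathfrak{I}$ shows $\bar{q} \circ \sigma = \sigma' \circ (\text{quotient by } \mathfrak{I})$, and passing to coronas yields $\rho \circ \tau_{\mathfrak{e}} = \tau_{\mathfrak{e}'}$. This is the step that requires the most care, since one must keep track of the several quotient maps and of the identification of the two copies of $\mathfrak{A}/\mathfrak{B}$; it is, however, purely formal and involves no analytic input.

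Finally I would invoke the hypothesis that $\mathfrak{e}'$ is essential, which is equivalent to injectivity of $\tau_{\mathfrak{e}'}$. Thus for every nonzero $a \in \mathfrak{A}/\mathfrak{B}$ we have $\rho(\tau_{\mathfrak{e}}(a)) = \tau_{\mathfrak{e}'}(a) \neq 0$, so $\tau_{\mathfrak{e}}(a) \notin \Ker \rho$ and in particular $\tau_{\mathfrak{e}}(a) \neq 0$. By the identification above, $\tau_{\mathfrak{e}}(a)$ is then a nonzero element of $\corona{\mathfrak{B}}$ lying outside the image of $\multialg{\mathfrak{B};\mathfrak{I}}$, whence Theorem \ref{thm:fullness} makes it norm-full in $\corona{\mathfrak{B}}$. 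Since this holds for all nonzero $a$, the extension $\mathfrak{e}$ is full, as claimed.
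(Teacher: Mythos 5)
Your proposal is correct and follows essentially the same route as the paper's own proof: the commutative triangle $\tau_{\mathfrak{e}'} = \rho \circ \tau_{\mathfrak{e}}$ relating the two Busby invariants (which the paper gets by citing Theorem 2.2 of \cite{ELPmorph}), the identification of $\Ker \rho$ with the image of $\multialg{\mathfrak{B};\mathfrak{I}}$ in $\corona{\mathfrak{B}}$, and then Theorem \ref{thm:fullness} applied to $\tau_{\mathfrak{e}}(a)$. The only cosmetic differences are that you verify the naturality and the kernel computation by hand rather than by citation, and you use only the injectivity of $\tau_{\mathfrak{e}'}$ where the paper invokes the (stronger, but unneeded) fullness of $\mathfrak{e}'$.
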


\begin{proof}
Note that the canonical projection from $\mathfrak{A}$ to $\mathfrak{A} / \mathfrak{I}$ is an $X_{2}$-equivariant homomorphism. 
Therefore, by Theorem 2.2 of \cite{ELPmorph}, the diagram
\begin{equation*}
\xymatrix{
\mathfrak{A} / \mathfrak{B} \ar[r]^-{ \tau_{ \mathfrak{e} } } \ar[rd]_-{ \tau_{ \mathfrak{e}' } }& \corona{ \mathfrak{B} }   \ar[d] \\
				& \corona{ \mathfrak{B} / \mathfrak{I} }	
}
\end{equation*}
is commutative.

We will first show that $\mathfrak{e}$ is an essential extension.  Suppose $\mathfrak{D}$ is a nonzero ideal of $\mathfrak{A}$.  Note that 
\begin{align*}
 \left( ( \mathfrak{I} + \mathfrak{D} ) / \mathfrak{I} \right) \cap \mathfrak{B} / \mathfrak{I} = 0 \quad \iff \quad ( \mathfrak{I} + \mathfrak{D} ) / \mathfrak{I} = 0 
\end{align*}  
and the second equality occurs exactly when $\mathfrak{D} \subseteq \mathfrak{I}$.  Suppose that $\mathfrak{D} \subseteq \mathfrak{I}$.  Then it is clear that $\mathfrak{D} \cap \mathfrak{B} = \mathfrak{D} \neq 0$.  Suppose $\mathfrak{D}$ is not a subset of $\mathfrak{I}$.  By the above equivalence, $\left( ( \mathfrak{I} + \mathfrak{D} ) / \mathfrak{I} \right) \cap \mathfrak{B} / \mathfrak{I} \neq  0$.  Hence, there exists $x \in \mathfrak{D}$ such that $x \in \mathfrak{B} \setminus \mathfrak{I}$.  Therefore, $x \in \mathfrak{D} \cap \mathfrak{B}$ and $x \neq 0$.  Hence, $\mathfrak{e}$ is an essential extension.

We now prove that $\mathfrak{e}$ is a full extension.  Note that since $\mathfrak{B} / \mathfrak{I}$ is a stable purely infinite simple \cstar-algebra and $\mathfrak{e}'$ is an essential extension, we have that $\mathfrak{e}'$ is a full extension.  Let $a \in \mathfrak{A} / \mathfrak{B}$ be a nonzero element.  Then the ideal generated by $\tau_{ \mathfrak{e}' } ( a )$ in $\corona{ \mathfrak{B} / \mathfrak{I} } $ is $\corona{ \mathfrak{B} / \mathfrak{I} }$.   

Since 
\begin{equation*}
0 \to \multialg{ \mathfrak{B} ; \mathfrak{I} } / \mathfrak{I} \to \corona{ \mathfrak{B} }  \to \corona{ \mathfrak{B} / \mathfrak{I} } \to 0
\end{equation*}
is an exact sequence, by the above commutative diagram, $\tau_{ \mathfrak{e} } ( a )$ is not an element of $\multialg{ \mathfrak{B} ; \mathfrak{I} } / \mathfrak{I}$.  Hence, by Theorem \ref{thm:fullness}, $\tau_{ \mathfrak{e} } ( a )$ is norm-full in $\corona{ \mathfrak{B} }$.
\end{proof}

\begin{propo}\label{prop:fullext}
Let $\mathfrak{A}$ be a \cstar-algebra and let $\mathfrak{I}$ and $\mathfrak{D}$ be ideals of $\mathfrak{A}$ with $\mathfrak{I} \subseteq \mathfrak{D}$.  Suppose $\mathfrak{D} / \mathfrak{I}$ is an essential ideal of $\mathfrak{A} / \mathfrak{I}$.  Then $\mathfrak{e}_{1} : \ 0 \to \mathfrak{I} \to \mathfrak{A} \to \mathfrak{A} / \mathfrak{I} \to 0$ is a full extension if and only if
$\mathfrak{e}_{2} : \ 0 \to \mathfrak{I} \to \mathfrak{D} \to \mathfrak{D} / \mathfrak{I} \to 0$ is a full extension.
\end{propo}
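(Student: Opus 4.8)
The plan is to use that both extensions have the same ideal $\mathfrak{I}$, hence the same Busby target $\corona{\mathfrak{I}}$, and to identify $\tau_{\mathfrak{e}_{2}}$ with the restriction of $\tau_{\mathfrak{e}_{1}}$ to the ideal $\mathfrak{D}/\mathfrak{I}$ of $\mathfrak{A}/\mathfrak{I}$. Both Busby invariants arise by composing the canonical $*$-homomorphism into $\multialg{\mathfrak{I}}$, induced by the bimodule action of the ambient algebra on $\mathfrak{I}$, with the quotient map $\multialg{\mathfrak{I}} \to \corona{\mathfrak{I}}$. Since for $d \in \mathfrak{D} \subseteq \mathfrak{A}$ the action on $\mathfrak{I}$ does not depend on whether $d$ is viewed in $\mathfrak{D}$ or in $\mathfrak{A}$, the canonical map $\mathfrak{D} \to \multialg{\mathfrak{I}}$ is the restriction of the one for $\mathfrak{A}$; passing to quotients yields $\tau_{\mathfrak{e}_{2}} = \tau_{\mathfrak{e}_{1}}|_{\mathfrak{D}/\mathfrak{I}}$ under the inclusion $\mathfrak{D}/\mathfrak{I} \hookrightarrow \mathfrak{A}/\mathfrak{I}$. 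I would establish this identification first.

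Granting it, the forward implication is immediate and uses no essentiality: if $\mathfrak{e}_{1}$ is full and $b \in \mathfrak{D}/\mathfrak{I}$ is nonzero, then $b$ is a nonzero element of $\mathfrak{A}/\mathfrak{I}$, so $\tau_{\mathfrak{e}_{2}}(b) = \tau_{\mathfrak{e}_{1}}(b)$ is norm-full in $\corona{\mathfrak{I}}$, giving fullness of $\mathfrak{e}_{2}$.

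For the converse, which is the substantive direction, assume $\mathfrak{e}_{2}$ is full and fix a nonzero $a \in \mathfrak{A}/\mathfrak{I}$. Essentiality of $\mathfrak{D}/\mathfrak{I}$ means its annihilator in $\mathfrak{A}/\mathfrak{I}$ is zero, so $a \cdot (\mathfrak{D}/\mathfrak{I}) \neq 0$ and we may pick $d \in \mathfrak{D}/\mathfrak{I}$ with $ad \neq 0$. Since $\mathfrak{D}/\mathfrak{I}$ is an ideal, $ad$ is a nonzero element of $\mathfrak{D}/\mathfrak{I}$, so $\tau_{\mathfrak{e}_{1}}(ad) = \tau_{\mathfrak{e}_{2}}(ad)$ is norm-full by fullness of $\mathfrak{e}_{2}$. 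Multiplicativity of the Busby invariant gives $\tau_{\mathfrak{e}_{1}}(ad) = \tau_{\mathfrak{e}_{1}}(a)\,\tau_{\mathfrak{e}_{1}}(d)$, and since $\corona{\mathfrak{I}}$ is unital the norm-closed ideal generated by a product $xy$ lies in that generated by $x$ (because $xy \in x\,\corona{\mathfrak{I}}$). Hence the ideal generated by $\tau_{\mathfrak{e}_{1}}(a)$ contains that generated by $\tau_{\mathfrak{e}_{1}}(ad)$, which is all of $\corona{\mathfrak{I}}$; thus $\tau_{\mathfrak{e}_{1}}(a)$ is norm-full and $\mathfrak{e}_{1}$ is full.

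I do not anticipate a serious obstacle. The two points requiring care are the clean identification $\tau_{\mathfrak{e}_{2}} = \tau_{\mathfrak{e}_{1}}|_{\mathfrak{D}/\mathfrak{I}}$ and the passage from fullness on the smaller ideal to fullness on all of $\mathfrak{A}/\mathfrak{I}$; the latter is exactly the product trick, which is where essentiality enters and is the conceptual heart of the argument.
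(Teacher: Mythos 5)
Your proof is correct and takes essentially the same route as the paper's: both identify $\tau_{\mathfrak{e}_{2}}$ with the restriction of $\tau_{\mathfrak{e}_{1}}$ along the inclusion $\mathfrak{D}/\mathfrak{I} \hookrightarrow \mathfrak{A}/\mathfrak{I}$, get the forward implication immediately from this, and prove the converse by using essentiality to produce a nonzero element of $\mathfrak{D}/\mathfrak{I}$ lying in the ideal generated by $a$, whose image under $\tau_{\mathfrak{e}_{1}}$ is norm-full and sits inside the ideal generated by $\tau_{\mathfrak{e}_{1}}(a)$. The only (immaterial) difference is that the paper uses the ideal-intersection form of essentiality to pick such an element, while you use the equivalent zero-annihilator form and take it to be $ad$.
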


\begin{proof}
Note that the natural embedding $\ftn{ \iota_{ \mathfrak{D} } }{ \mathfrak{D} }{ \mathfrak{A} }$ is an $X_{2}$-equivariant homomorphism with $\iota_{ \mathfrak{I} } = \id_{ \mathfrak{I} }$.  Hence, the following diagram is commutative
\begin{equation*}
\xymatrix{
0 \ar[r] & \mathfrak{I} \ar[r] \ar@{=}[d] & \mathfrak{D} \ar[r] \ar[d] & \mathfrak{D} / \mathfrak{I} \ar[d]^{ \iota_{ \mathfrak{D} / \mathfrak{I } } } \ar[r] & 0 \\
0 \ar[r] & \mathfrak{I} \ar[r] & \mathfrak{A} \ar[r] & \mathfrak{A} / \mathfrak{I} \ar[r] & 0
}
\end{equation*}
Therefore, the diagram
\begin{equation*}
\xymatrix{
\mathfrak{D} / \mathfrak{I} \ar[r]^-{ \tau_{ \mathfrak{e}_{2} } }\ar[d]_{ \iota_{ \mathfrak{D} / \mathfrak{I} } } & \corona{ \mathfrak{I} }  \ar@{=}[d] \\
\mathfrak{A} / \mathfrak{I} \ar[r]_-{ \tau_{ \mathfrak{e}_{1} } } & \corona{ \mathfrak{I} } 
}
\end{equation*}
is commutative.

Suppose $\mathfrak{e}_{1}$ is a full extension.  Then it is clear from the above diagram that $\mathfrak{e}_{2}$ is a full extension.  Suppose that $\mathfrak{e}_{2}$ is a full extension.   Let $a \in \mathfrak{A} / \mathfrak{I}$ be a non-zero element.  Let $I$ be the ideal generated by $a$ in $\mathfrak{A} / \mathfrak{I}$.  Since $\mathfrak{D} / \mathfrak{I}$ is an essential ideal of $\mathfrak{A} / \mathfrak{I}$, there exists a non-zero $b \in \mathfrak{D} / \mathfrak{I}$ such that $\iota_{ \mathfrak{D} / \mathfrak{I} } ( b ) \in I$.  Hence, $\tau_{ \mathfrak{e}_{2} } ( b ) = ( \tau_{ \mathfrak{e}_{1} } \circ \iota_{ \mathfrak{D} / \mathfrak{I } } )( b )$ is norm-full in $\corona{ \mathfrak{I} } $.  Since $( \tau_{ \mathfrak{e}_{1} } \circ \iota_{ \mathfrak{D} / \mathfrak{I} } )( b )$ is in the ideal generated by $\tau_{ \mathfrak{e}_{1}  } ( a )$ in $\corona{ \mathfrak{I} } $, we have that $\tau_{ \mathfrak{e}_{1} } ( a )$ is norm-full in $\corona{ \mathfrak{I} }$.  Thus, $\mathfrak{e}_{1}$ is a full extension.
\end{proof}

\begin{propo}\label{prop:afpifull}
Let $\mathfrak{A}$ be a graph $C^{*}$-algebra satisfying Condition (K).  Suppose $\mathfrak{I}_{1} \unlhd \mathfrak{I}_{2} \unlhd \mathfrak{A}$ such that $\mathfrak{I}_{1}$ is an AF algebra, $\mathfrak{I}_{1}$ is the largest proper ideal of $\mathfrak{I}_{2}$, $\mathfrak{I}_{2} / \mathfrak{I}_{1}$ is purely infinite.  Then $\mathfrak{e} \ : \ 0 \to \mathfrak{I}_{1} \otimes \K  \to \mathfrak{I}_{2} \otimes \K \to \mathfrak{I}_{2} / \mathfrak{I}_{1} \otimes \K \to 0$ is a full extension.
\end{propo}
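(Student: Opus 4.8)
The plan is to reduce the assertion to the norm-fullness of a single Busby image and then to read that fullness off from the combinatorics of the graph. First I would record the structural consequences of the hypotheses. Since $\mathfrak{I}_1$ is the largest proper ideal of $\mathfrak{I}_2$, the quotient $\mathfrak{I}_2/\mathfrak{I}_1$ is simple and every nonzero ideal of $\mathfrak{I}_2$ either equals $\mathfrak{I}_2$ or is contained in $\mathfrak{I}_1$; in particular $\mathfrak{I}_1$ is an essential ideal of $\mathfrak{I}_2$, so $\mathfrak{I}_1\otimes\K$ is essential in $\mathfrak{I}_2\otimes\K$ and $\mathfrak{e}$ is an essential extension with injective Busby invariant $\ftn{\tau_{\mathfrak{e}}}{(\mathfrak{I}_2/\mathfrak{I}_1)\otimes\K}{\corona{\mathfrak{I}_1\otimes\K}}$. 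Because $(\mathfrak{I}_2/\mathfrak{I}_1)\otimes\K$ is simple, $\tau_{\mathfrak{e}}$ is a \starmono out of a simple \cstar-algebra, so every nonzero element is full in the domain and hence $\mathcal{I}(\tau_{\mathfrak{e}}(a))=\mathcal{I}(\tau_{\mathfrak{e}}(b))$ in $\corona{\mathfrak{I}_1\otimes\K}$ for all nonzero $a,b$. Thus $\mathfrak{e}$ is full if and only if $\tau_{\mathfrak{e}}(a_0)$ is norm-full for one (equivalently every) nonzero $a_0$, and it is this single instance that I would verify. (Essentiality by itself would not suffice, as a direct-sum extension shows; it is the maximality of $\mathfrak{I}_1$ that must rule this out.)

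Unlike the situation of Corollary \ref{cor:fullness}, the ideal here is AF rather than $\mathcal{O}_\infty$-absorbing, so Theorem \ref{thm:fullness} does not apply to $\corona{\mathfrak{I}_1\otimes\K}$ and the pure infiniteness has to be imported from the quotient through the graph. Using Condition (K) I would pass to the level of saturated hereditary vertex sets, writing $\mathfrak{I}_1$ and $\mathfrak{I}_2$ for the ideals attached to $H_1\subseteq H_2$. That $\mathfrak{I}_1$ is AF means the restriction of the graph to $H_1$ has no cycles, while $\mathfrak{I}_2/\mathfrak{I}_1$ being purely infinite simple means the stratum $H_2\setminus H_1$ is cofinal and carries a cycle with an exit. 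The decisive point is that $\mathfrak{I}_1$ being the largest proper ideal of $\mathfrak{I}_2$ forces the saturated hereditary closure inside $H_2$ of $H_2\setminus H_1$ to be all of $H_2$, so that every vertex of $H_1$ is connected to the cyclic top stratum. Fixing a vertex $v_0$ on a cycle in $H_2\setminus H_1$, its vertex projection determines a properly infinite full element $a_0$ of $(\mathfrak{I}_2/\mathfrak{I}_1)\otimes\K$, and the reachability statement says that a lift of $a_0$ is full in $\mathfrak{I}_2\otimes\K$.

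The main obstacle is the final step: upgrading the \emph{algebraic} fullness of a lift of $a_0$ in $\mathfrak{I}_2\otimes\K$ to \emph{norm}-fullness of $\tau_{\mathfrak{e}}(a_0)$ in the corona, since norm-fullness concerns behaviour at infinity that is not detected by the ideal lattice of $\mathfrak{I}_2\otimes\K$. Here I would argue in the spirit of the proof of Theorem \ref{thm:fullness}: diagonalize with respect to an approximate identity of projections for $\mathfrak{I}_1\otimes\K$, and use the cycle at $v_0$ together with the reachability of every vertex of $H_1$ to dominate, block by block, the successive differences of that approximate identity. This would show that the norm-closed ideal generated by a lift of $a_0$ together with $\mathfrak{I}_1\otimes\K$ exhausts $\multialg{\mathfrak{I}_1\otimes\K}$, which is precisely the norm-fullness of $\tau_{\mathfrak{e}}(a_0)$. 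Combined with the reduction of the first paragraph, this yields that $\mathfrak{e}$ is a full extension.

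I expect the genuinely delicate part to be this corona-level estimate, because it is exactly the place where the purely infinite behaviour of the quotient has to compensate for the absence of pure infiniteness in the AF ideal; everything before it (simplicity of the quotient, essentiality, and the translation of maximality into graph connectivity) is comparatively formal.
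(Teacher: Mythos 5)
Your first paragraph is correct (simplicity of the quotient reduces everything to norm-fullness of $\tau_{\mathfrak{e}}(a_0)$ for a single nonzero $a_0$), and your instinct that the corona-level estimate is ``the genuinely delicate part'' is also correct --- but that is precisely the step your proposal does not supply, and the mechanism you propose for it cannot work. The proof of Theorem \ref{thm:fullness} runs on Proposition 2.2 of \cite{HL_fullext}: in a unital $\mathcal{O}_{\infty}$-absorbing algebra, every norm-full positive element $w$ admits $z$ with $zwz^{*}$ equal to the unit. In your setting the blocks of a diagonalized lift of $\tau_{\mathfrak{e}}(a_0)$ live in corners $(e_{\alpha(m')}-e_{\alpha(m-1)})(\mathfrak{I}_1\otimes\K)(e_{\alpha(m')}-e_{\alpha(m-1)})$ of a stable AF algebra, and such corners are stably finite: if $zwz^{*}=e$ with $e$ the corner unit, then $w^{1/2}z^{*}$ is an isometry in the corner, hence a unitary, hence $w$ is invertible. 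So ``dominating the successive differences block by block'' by full but non-invertible positive elements is impossible; the pure infiniteness of the quotient cannot be imported into corners of the AF ideal in this way, and R{\o}rdam's criterion (Corollary 2.7 of \cite{mr_ideal}), which closes the argument in Theorem \ref{thm:fullness}, is then out of reach. This is a genuine gap, not a detail to be filled in ``in the spirit of'' the purely infinite case.

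The paper sidesteps the corona analysis entirely by making the extension unital before stabilizing. After replacing $\mathfrak{A}\otimes\K$ by $C^{*}(E)\otimes\K$ with $E$ having no breaking vertices (\cite{ddmt_arbgraph}) and realizing $\mathfrak{I}_2\otimes\K$ as $C^{*}(E_1)\otimes\K$ (\cite{bgrs_idealstrucgraph}), Proposition 3.10 of \cite{semt_classgraphalg} produces a projection $p$ such that $pC^{*}(E_1)p$ is a full corner and $p\mathfrak{I}_1p$ is stable. The corner extension $0\to p\mathfrak{I}_1p\to pC^{*}(E_1)p\to pC^{*}(E_1)p/p\mathfrak{I}_1p\to 0$ is unital and essential with unital \emph{simple} quotient, so its Busby invariant is unital and injective, and fullness is automatic: the closed ideal generated by $\tau(a)$ contains $\tau(1)=1$ for every nonzero $a$. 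Proposition 1.6 of \cite{ERRshift} (fullness passes to the stabilized extension when the ideal is stable) and the invariance of fullness under isomorphism then finish the proof. Note that the graph combinatorics you invoke enter only through the existence of the projection $p$; no block-by-block estimate in $\multialg{\mathfrak{I}_1\otimes\K}$ is ever performed, and your proposal needs to be rerouted through some such unitization device to close the gap.
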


\begin{proof}
By \cite{ddmt_arbgraph}, $\mathfrak{A} \otimes \K \cong C^{*} (E) \otimes \K$, where $E$ is a graph satisfying Condition (K) and has no breaking vertices.  Hence, by Theorem 3.6 of \cite{bgrs_idealstrucgraph} and Proposition 3.4 of \cite{bgrs_idealstrucgraph}, $\mathfrak{I}_{2} \otimes \K$ is isomorphic to a $C^{*} ( E_{1} ) \otimes \K$ where $E_{1}$ has no breaking vertices and satisfies Condition (K).  Note that $C^{*} ( E_{1} )$ has a largest proper ideal $\mathfrak{D}_{1}$, that $C^{*} ( E_{1} ) / \mathfrak{D}_{1}$ is purely infinite, $\mathfrak{D}_{1}$ is an AF algebra, and $\mathfrak{D}_{1} \otimes \K \cong \mathfrak{I}_{1} \otimes \K$.  Thus, by Proposition 3.10 of \cite{semt_classgraphalg}, there exists a projection $p \in C^{*} ( E_{1} )$ such that $p C^{*} ( E_{1} ) p \otimes \K \cong C^{*} ( E_{1} ) \otimes \K$ and $p \mathfrak{D}_{1} p$ is stable.  Since $p C^{*} ( E_{1} ) p / p \mathfrak{D}_{1}p$ is unital simple $C^{*}$-algebra and 
\begin{align*}
0 \to p \mathfrak{D}_{1} p \to p C^{*} ( E_{1} ) p \to p C^{*} ( E_{1} ) p / p \mathfrak{D}_{1}p \to 0
\end{align*}
is a unital essential extension, the extension is full, cf.~Lemma 1.5 of \cite{ERRshift}.  Since $p \mathfrak{D}_{1} p$ is stable, Proposition 1.6 of \cite{ERRshift} implies that the extension
\begin{equation*}
0 \to p \mathfrak{D}_{1} p  \otimes \K \to p C^{*} ( E_{1} ) p \otimes \K \to \left( p C^{*} ( E_{1} ) p / p \mathfrak{D}_{1}p \right) \otimes \K \to 0
\end{equation*}
is full.  The proposition now follows since the isomorphism between $p C^{*} ( E_{1} ) p \otimes \K$
and $C^{*} ( E_{1} ) p \otimes \K$ maps the ideal $p\mathfrak{D}_1 p\otimes \K$ onto the ideal
$\mathfrak{D}_1\otimes\K$ by Brown's Theorem, cf.~\cite{heralgs}.
\end{proof}

\begin{corol}\label{cor:afpifull}
Let $\mathfrak{A}$ be a graph $C^{*}$-algebra satisfying Condition (K).  Suppose that $\mathfrak{I}$ is an AF algebra such that $\mathfrak{I}$ is an ideal of $\mathfrak{A}$, for all ideals $\mathfrak{J}$ of $\mathfrak{A}$ we have that $\mathfrak{J} \subseteq \mathfrak{I}$ or $\mathfrak{I} \subseteq \mathfrak{J}$, and $\mathfrak{A} / \mathfrak{I}$ is $\mathcal{O}_{\infty}$-absorbing.  Then $\mathfrak{e} : 0 \to \mathfrak{I} \otimes \K \to \mathfrak{A} \otimes \K \to \mathfrak{A} / \mathfrak{I} \otimes \K \to 0$ is a full extension. 
\end{corol}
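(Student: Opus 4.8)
The plan is to reduce the fullness of $\mathfrak{e}$ to the simple-quotient situation already settled in Proposition~\ref{prop:afpifull}, using Proposition~\ref{prop:fullext} to pass to an essential ideal. Since $\mathfrak{A}$ is a graph $C^{*}$-algebra satisfying Condition (K), the quotient $\mathfrak{A}/\mathfrak{I}$ is again a (purely infinite) graph $C^{*}$-algebra whose ideals are gauge-invariant. I would first record that it possesses a smallest essential ideal, namely the closed sum $\mathfrak{D}/\mathfrak{I} = \bigoplus_{j} \mathfrak{K}_{j}$ of its minimal nonzero ideals $\mathfrak{K}_{1}, \dots, \mathfrak{K}_{m}$, each of which is simple and purely infinite; here $\mathfrak{D} \unlhd \mathfrak{A}$ is the ideal corresponding to $\mathfrak{D}/\mathfrak{I}$. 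Because $\mathfrak{D}/\mathfrak{I}$ is essential in $\mathfrak{A}/\mathfrak{I}$, the stabilized quotient $(\mathfrak{D}/\mathfrak{I}) \otimes \K$ is essential in $(\mathfrak{A}/\mathfrak{I}) \otimes \K$, so Proposition~\ref{prop:fullext} applied to $\mathfrak{I} \otimes \K \subseteq \mathfrak{D} \otimes \K \subseteq \mathfrak{A} \otimes \K$ reduces the claim to the fullness of
\[
0 \to \mathfrak{I} \otimes \K \to \mathfrak{D} \otimes \K \to (\mathfrak{D}/\mathfrak{I}) \otimes \K \to 0.
\]

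Next I would treat each minimal summand separately. For each $j$, let $\mathfrak{J}_{j} \unlhd \mathfrak{A}$ be the preimage of $\mathfrak{K}_{j}$; by the comparability hypothesis $\mathfrak{I} \subseteq \mathfrak{J}_{j}$, and since every ideal of $\mathfrak{J}_{j}$ is an ideal of $\mathfrak{A}$ (gauge-invariance for Condition (K) graph algebras) and hence comparable with $\mathfrak{I}$, the simplicity of $\mathfrak{J}_{j}/\mathfrak{I} = \mathfrak{K}_{j}$ forces $\mathfrak{I}$ to be the largest proper ideal of $\mathfrak{J}_{j}$, with $\mathfrak{J}_{j}/\mathfrak{I}$ purely infinite. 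Thus Proposition~\ref{prop:afpifull} applies and shows that
\[
0 \to \mathfrak{I} \otimes \K \to \mathfrak{J}_{j} \otimes \K \to \mathfrak{K}_{j} \otimes \K \to 0
\]
is a full extension, i.e.\ its Busby map sends every nonzero element of $\mathfrak{K}_{j} \otimes \K$ to a norm-full element of $\corona{\mathfrak{I} \otimes \K}$.

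Finally I would glue these together. The inclusions $\mathfrak{K}_{j} = \mathfrak{J}_{j}/\mathfrak{I} \hookrightarrow \mathfrak{D}/\mathfrak{I}$ are $X_{2}$-equivariant over the common ideal $\mathfrak{I}$, so exactly as in the commuting-diagram argument of Proposition~\ref{prop:fullext} the Busby map $\tau_{\mathfrak{e}}$ of the reduced $\mathfrak{D}$-extension restricts on each summand to the Busby map of the $\mathfrak{J}_{j}$-extension. Given a nonzero $a = (a_{j})_{j} \in (\mathfrak{D}/\mathfrak{I}) \otimes \K = \bigoplus_{j} (\mathfrak{K}_{j} \otimes \K)$, pick $j$ with $a_{j} \neq 0$; since $\mathfrak{K}_{j}$ is simple, $a_{j}$ lies in the ideal of $(\mathfrak{D}/\mathfrak{I}) \otimes \K$ generated by $a$, so the norm-closed ideal of $\corona{\mathfrak{I} \otimes \K}$ generated by $\tau_{\mathfrak{e}}(a)$ contains the norm-full element $\tau_{\mathfrak{e}}$ assigns to $a_{j}$. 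Hence $\tau_{\mathfrak{e}}(a)$ is itself norm-full, which is the fullness of the reduced extension and, by the first paragraph, of $\mathfrak{e}$.

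The main obstacle is the possible branching of the ideal lattice of $\mathfrak{A}/\mathfrak{I}$: the comparability hypothesis controls ideals only relative to $\mathfrak{I}$ and permits $\mathfrak{A}/\mathfrak{I}$ to have several incomparable minimal ideals, so one cannot simply select a single simple essential ideal and invoke Proposition~\ref{prop:afpifull}. The delicate points are therefore isolating the minimal ideals and verifying that $\mathfrak{I}$ is the largest proper ideal of each preimage $\mathfrak{J}_{j}$ — which genuinely uses both comparability and the gauge-invariant ideal structure afforded by Condition (K) — together with checking that norm-fullness on each simple summand propagates to norm-fullness on the direct sum.
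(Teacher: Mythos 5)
Your proof takes essentially the same route as the paper's: both pass to the preimages of the minimal ideals of $\mathfrak{A}/\mathfrak{I}$, use the comparability hypothesis (plus the fact that an ideal of an ideal is an ideal) to verify that $\mathfrak{I}$ is the largest proper ideal of each preimage so that Proposition \ref{prop:afpifull} applies, glue the resulting full extensions over the closed sum $\mathfrak{D}$ of the preimages using that distinct minimal ideals intersect trivially, and conclude with Proposition \ref{prop:fullext} via the essentiality of $\mathfrak{D}/\mathfrak{I}$ in $\mathfrak{A}/\mathfrak{I}$. The only differences are cosmetic: you spell out the direct-sum gluing of the Busby invariants more explicitly than the paper does, and you index finitely many minimal ideals where the paper allows countably many (your argument goes through verbatim in that generality).
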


\begin{proof}
Let $\setof{ \mathfrak{C}_{n} }{ n \in \N }$ be the set of all minimal ideals of $\mathfrak{A} / \mathfrak{I}$ and let $\mathfrak{A}_{n}$ be an ideal of $\mathfrak{A}$ such that $\mathfrak{I} \subseteq \mathfrak{A}_{n}$ and $\mathfrak{A}_{n} / \mathfrak{I} = \mathfrak{C}_{n}$.  

Let $\mathfrak{J}$ be an ideal of $\mathfrak{A}_{n}$.  Then $\mathfrak{J}$ is an ideal of $\mathfrak{A}$.  Hence, $\mathfrak{J} \subseteq \mathfrak{I}$ or $\mathfrak{I} \subseteq \mathfrak{J}$.  Suppose $\mathfrak{I} \subseteq \mathfrak{J}$ but $\mathfrak{I} \neq \mathfrak{J}$.  Then, $\mathfrak{J} / \mathfrak{I} = \mathfrak{A}_{n} / \mathfrak{I} = \mathfrak{C}_{n}$ since $\mathfrak{C}_{n}$ is simple.  Hence, $\mathfrak{I}$ is the largest proper ideal of $\mathfrak{A}_{n}$, $\mathfrak{I}$ is an AF algebra, and $\mathfrak{A}_{n} / \mathfrak{I}$ is purely infinite.  Therefore, by Proposition \ref{prop:afpifull}, $0 \to \mathfrak{I} \otimes \K \to \mathfrak{A}_{n} \otimes \K \to \mathfrak{C}_{n} \otimes \K \to 0$
is a full extension.  

Let $\mathfrak{D} = \overline{ \sum_{ n = 1}^{\infty } \mathfrak{A}_{n} }$.  Then $
\mathfrak{D}$ is an ideal of $\mathfrak{A}$ such that $\mathfrak{D} / \mathfrak{I}$ is an essential ideal of $\mathfrak{A} / \mathfrak{I}$.  Since $\mathfrak{C}_{i} \cap \mathfrak{C}_{j} = \{ 0 \}$ for $i \neq j$, we have that $0 \to \mathfrak{I} \otimes \K \to \mathfrak{D} \otimes \K \to  \mathfrak{D} / \mathfrak{I} \ \otimes \K \to 0$.  is a full extension.  The corollary now follows from  Proposition \ref{prop:fullext}.
\end{proof}

\section{Applications to graph $C^{*}$-algebras}

Recall our definition of  $X_{n}$ from  Example \ref{Xn} above.  We now apply the results of Section \ref{class} and Section \ref{full} to classify a certain class of graph $C^{*}$-algebras that are tight $C^{*}$-algebras over $X_{n}$.    

\begin{propo}\label{prop:cfprop}
Suppose $\mathfrak{A}$ is a $C^{*}$-algebra with finitely many ideals.  If the stabilization of every simple sub-quotient of $\mathfrak{A} \otimes \K$ satisfies the corona factorization property, then $\mathfrak{A} \otimes \K$ satisfies the corona factorization property.  Consequently, any graph $C^{*}$-algebra with finitely many ideals has the corona factorization property.
\end{propo}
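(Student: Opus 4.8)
The plan is to induct along a finite composition series for $\mathfrak{A}\otimes\K$, reducing everything to a single permanence statement for the corona factorization property under extensions. First I would note that the closed ideals of $\mathfrak{A}\otimes\K$ are exactly those of the form $\mathfrak{J}\otimes\K$ with $\mathfrak{J}\in\mathbb{I}(\mathfrak{A})$, so the finiteness of $\mathbb{I}(\mathfrak{A})$ provides a composition series
\[
0=\mathfrak{J}_{0}\otimes\K\unlhd\mathfrak{J}_{1}\otimes\K\unlhd\cdots\unlhd\mathfrak{J}_{n}\otimes\K=\mathfrak{A}\otimes\K
\]
that can be refined so that each $\mathfrak{J}_{k}/\mathfrak{J}_{k-1}$ is simple. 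The subquotient $(\mathfrak{J}_{k}/\mathfrak{J}_{k-1})\otimes\K$ is then stable, is (up to isomorphism) the stabilization of a simple subquotient of $\mathfrak{A}$, and hence satisfies the corona factorization property by hypothesis.

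The heart of the argument is the claim that the corona factorization property is stable under extensions: if $0\to\mathfrak{I}\to\mathfrak{E}\to\mathfrak{E}/\mathfrak{I}\to 0$ is an extension of stable, $\sigma$-unital $C^{*}$-algebras in which both $\mathfrak{I}$ and $\mathfrak{E}/\mathfrak{I}$ have the corona factorization property, then so does $\mathfrak{E}$ (cf.\ \cite{KucNgCFPdef,NgCFP}). Granting this, the induction on $n$ runs as follows. The case $n=1$ is exactly the hypothesis, since $\mathfrak{A}\otimes\K$ is then simple and already stable. For the inductive step I would apply the claim to
\[
0\to\mathfrak{J}_{n-1}\otimes\K\to\mathfrak{A}\otimes\K\to(\mathfrak{A}/\mathfrak{J}_{n-1})\otimes\K\to 0,
\]
noting that $\mathfrak{J}_{n-1}\otimes\K$ has the corona factorization property by the inductive hypothesis and that the quotient $(\mathfrak{A}/\mathfrak{J}_{n-1})\otimes\K$ is a stable simple algebra with the corona factorization property. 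Since every term here is stable and $\sigma$-unital, the stability hypotheses in the permanence claim are automatically met, and the conclusion that $\mathfrak{A}\otimes\K$ has the corona factorization property follows. The step I expect to be the main obstacle is precisely this extension permanence: one must verify (or isolate from the literature) that a norm-full projection in $\multialg{\mathfrak{E}}$ can be handled by first passing to the quotient, where its image becomes full and hence equivalent to the identity by the corona factorization property of $\mathfrak{E}/\mathfrak{I}$, and then correcting inside the ideal using the corona factorization property of $\mathfrak{I}$; keeping track of fullness and of the stability hypotheses through this two-step lifting is the delicate point.

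Finally, for the consequence about graph $C^{*}$-algebras I would invoke the dichotomy that every simple subquotient of a graph $C^{*}$-algebra is either an AF algebra or a purely infinite simple (Kirchberg) algebra. Both classes are known to have the corona factorization property, and since the property depends only on the stable isomorphism class, the stabilizations of the simple subquotients satisfy it as well. The first part then applies verbatim to any graph $C^{*}$-algebra with finitely many ideals.
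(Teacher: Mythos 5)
Your proposal is correct and takes essentially the same route as the paper: induct on the finitely many ideals of $\mathfrak{A} \otimes \K$ and reduce everything to permanence of the corona factorization property under extensions of stable $C^{*}$-algebras, with the graph-algebra consequence following from the AF/purely-infinite dichotomy for simple subquotients. The extension-permanence step you flag as the main obstacle requires no ad hoc verification: it is precisely Theorem 3.1(1) of \cite{KucNgReg}, which is exactly what the paper invokes at this point (your citations \cite{KucNgCFPdef} and \cite{NgCFP} point to the wrong papers for this permanence result).
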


\begin{proof}
We will prove the result of the proposition by induction.  If $\mathfrak{A}$ is simple, then by our assumption, $\mathfrak{A} \otimes \K$ has the corona factorization property.  Suppose that the proposition is true for any $C^{*}$-algebra $\mathfrak{B}$ with at most $n$ ideals such that the stabilization of any simple sub-quotient of $\mathfrak{B} \otimes \K$ satisfies the corona factorization property.  

Let $\mathfrak{A}$ be a $C^{*}$-algebra with $n+1$ ideals such that the stabilization of every simple sub-quotient of $\mathfrak{A} \otimes \K$ satisfies the corona factorization property.  Let $\mathfrak{I}$ be a proper non-trivial ideal of $\mathfrak{A} \otimes \K$.  Then $\mathfrak{I}$ and $\mathfrak{A} / \mathfrak{I}$ are $C^{*}$-algebras with at most $n$ ideals such that the stabilization of every simple sub-quotient of $\mathfrak{I}$ and $\mathfrak{A} / \mathfrak{I}$ satisfies the corona factorization property.  Hence, $\mathfrak{I} \otimes \K$ and $\mathfrak{A} / \mathfrak{I} \otimes \K$ satisfy the corona factorization property.  Therefore, by Theorem 3.1(1) of \cite{KucNgReg}, $\mathfrak{A} \otimes \K$ satisfies the corona factorization property.
\end{proof}

The authors have been recently informed by Eduard Ortega that his joint work with Francesc Perera and Mikael R{\o}rdam (see \cite{OPRcp}) implies that the stabilization of any graph $C^{*}$-algebra has the corona factorization property. 
 
\begin{theor}(Meyer-Nest \cite{rmrn:uctkkx})
For the topological space $X_{n}$, if $\mathfrak{A}$ and $\mathfrak{B}$ are separable, nuclear, $C^{*}$-algebras over $X_{n}$ such that $\mathfrak{A} [ k ]$ and $\mathfrak{B} [ k ]$ are in the bootstrap category $\mathcal{N}$, then any isomorphism $\ftn{ \alpha }{ \mathrm{FK}_{X_{n}}( \mathfrak{A} ) }{ \mathrm{FK}_{X_{n}} (B) }$ lifts to an invertible element in $\kk ( X_{n} ; \mathfrak{A} , \mathfrak{B} )$.
\end{theor}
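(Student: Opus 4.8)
The plan is to reconstruct the universal coefficient spectral sequence of Meyer and Nest for the space $X_n$ and then extract invertibility from it by a formal phantom-ideal argument. Since every simple subquotient $\mathfrak{A}[k]$ and $\mathfrak{B}[k]$ lies in $\bootstrap$, I would first verify that both $\mathfrak{A}$ and $\mathfrak{B}$ lie in the bootstrap subcategory of the triangulated category $\kkc(X_n)$ generated by the elementary building blocks (the algebras whose filtered $K$-theory is concentrated on a single piece $[i,i]$); this is the domain where the homological machinery applies. One then regards $\mathrm{FK}_{X_n}$ as a stable homological functor from $\kkc(X_n)$ into the abelian category $\mathcal{A}$ of modules over the $\Z/2$-graded ring assembled from the natural transformations $\iota_*,\pi_*,\partial_*$, and lets $\mathcal{I}\subseteq\kkc(X_n)$ denote the homological ideal of morphisms sent to zero by $\mathrm{FK}_{X_n}$.

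The first substantial step is to show that $\mathcal{I}$ is a stable homological ideal admitting enough projective objects. Here I would construct, for each connected locally closed piece $[i,j]$, an object of $\kkc(X_n)$ representing the corresponding free $\mathcal{A}$-module and whose image under $\mathrm{FK}_{X_n}$ is projective; the comparison maps realizing arbitrary module homomorphisms at the level of $\kk(X_n;-,-)$ are produced piecewise by the classical Rosenberg–Schochet UCT applied to the subquotients, which are in $\bootstrap$. Direct sums of suspensions of these pieces then form a projective class, and every object of the bootstrap subcategory admits a projective $\mathcal{I}$-resolution.

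The crux is the homological computation: because $X_n$ carries a total order, the category $\mathcal{A}$ of filtered-$K$-theory modules has homological dimension at most one relative to this projective class. Granting this, the Meyer–Nest machinery collapses the spectral sequence to a short exact sequence
\[
0 \to \Ext^1_{\mathcal{A}}\!\big(\mathrm{FK}_{X_n}(\mathfrak{A})[1],\mathrm{FK}_{X_n}(\mathfrak{B})\big) \to \kk(X_n;\mathfrak{A},\mathfrak{B}) \xrightarrow{\ \mathrm{FK}_{X_n}\ } \Hom_{\mathcal{A}}\!\big(\mathrm{FK}_{X_n}(\mathfrak{A}),\mathrm{FK}_{X_n}(\mathfrak{B})\big) \to 0,
\]
natural in both variables. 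Establishing this dimension bound — checking that the only obstructions live in $\Ext^1$ and that $\Ext^2$ vanishes for the total-order representation category — is the main obstacle, and it is exactly the point at which the linearity of $X_n$ is indispensable; it can be verified by exhibiting explicit two-term projective resolutions of the indecomposable modules.

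Finally I would extract the stated conclusion formally. Lift $\alpha$ to some $x \in \kk(X_n;\mathfrak{A},\mathfrak{B})$ and lift $\alpha^{-1}$ to some $y$. Under $\mathrm{FK}_{X_n}$ both $xy$ and $yx$ map to the identity, so each differs from $\id$ by an element of $\mathcal{I}$. Because the universal coefficient sequence carries only an $\Ext^1$ term, the composition of two phantom maps factors through $\Ext^2 = 0$, so $\mathcal{I}^2 = 0$; hence every element of $\id + \mathcal{I}$ is invertible, with inverse obtained by subtracting its phantom part. Thus $xy$ and $yx$ are invertible, which furnishes $x$ with both a left and a right inverse and forces $x$ to be invertible in $\kk(X_n;\mathfrak{A},\mathfrak{B})$. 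A more self-contained alternative would induct on $n$ via the ideal–quotient extension $0\to\mathfrak{A}[n]\to\mathfrak{A}\to\mathfrak{A}[1,n-1]\to 0$ and a Mayer–Vietoris argument, but controlling the connecting maps there reduces to the same homological-dimension input.
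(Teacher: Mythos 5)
First, a point of orientation: the paper does not prove this statement at all --- it is imported verbatim as a theorem of Meyer--Nest \cite{rmrn:uctkkx}, so there is no in-paper proof to compare against. Your proposal is therefore judged as a reconstruction of the cited Meyer--Nest argument, and in its architecture it does track that argument faithfully: fiberwise bootstrap membership gives membership in the bootstrap class of $\kkc(X_n)$ (this is the result of \cite{rmrn:bootstrap}), one works with the homological ideal $\mathcal{I} = \ker \mathrm{FK}_{X_n}$ and the projective objects representing the functors $\mathfrak{B} \mapsto K_*(\mathfrak{B}[i,j])$, a projective resolution of length $1$ collapses the ABC spectral sequence to a natural short exact UCT sequence, and your final formal step (lift $\alpha$ and $\alpha^{-1}$, observe $\mathcal{I}^2 = 0$ by naturality of the $\Ext^1$-term, deduce that $x$ has a left and a right inverse) is the standard and correct way to extract invertibility.

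The genuine gap is in the crucial homological step, which you assert for the wrong class of modules. It is \emph{false} that the whole category $\mathcal{A}$ of modules over the natural-transformation category has homological dimension $\leq 1$, even for $n=2$: the module which is $\ZZ$ at the single vertex $\{1\}$ and $0$ elsewhere, with all transformations acting as zero, has \emph{infinite} projective dimension --- its minimal resolution cycles periodically through the three simple modules, none of which is projective. What Meyer--Nest actually prove for totally ordered $X_n$ is that the \emph{exact} modules (those satisfying six-term exactness, which is precisely the essential range of $\mathrm{FK}_{X_n}$) admit projective resolutions of length $1$ inside $\mathcal{A}$; exactness of $\mathrm{FK}_{X_n}(\mathfrak{A})$, not just the total order, is indispensable, and the UCT sequence only needs the bound for the source module $\mathrm{FK}_{X_n}(\mathfrak{A})$. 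Relatedly, your proposed verification ``by exhibiting explicit two-term projective resolutions of the indecomposable modules'' cannot work as stated: $\mathrm{FK}_{X_n}(\mathfrak{A})$ is countable but typically infinitely generated, and such modules need not decompose into indecomposables (there is no Krull--Schmidt theorem here), so one must construct the length-$1$ resolution directly for an arbitrary countable exact module, which is what Meyer--Nest do. With the dimension claim restricted to exact modules and proved for all of them, the remainder of your outline goes through.
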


\begin{propo}\label{prop:classgraph1}
Let $\mathfrak{A}_{1}$ and $\mathfrak{A}_{2}$ separable, nuclear, $C^{*}$-algebras over $X_{n}$.  Suppose $\mathfrak{A}_{i} [ 1 ]$ is an AF algebra and $\mathfrak{A}_{i} [2, n] $ is a tight stable $\mathcal{O}_{\infty}$-absorbing $C^{*}$-algebra over $[2, n]$, and $\mathfrak{A}_{i} [ 2 ]$ is an essential ideal of $\mathfrak{ A }_{i} [1 , 2 ]$.  Then $\mathfrak{A}_{1} \otimes \K \cong \mathfrak{A}_{2} \otimes \K$ if and only if there exists an isomorphism $\ftn{ \alpha }{ \mathrm{FK}_{X_{n}} ( \mathfrak{A}_{1} ) } { \mathrm{FK}_{X_{n}} ( \mathfrak{A}_{2} ) }$ such that $\alpha_{ \{ 1 \} }$ is positive. 
\end{propo}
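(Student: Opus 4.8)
The plan is to derive the nontrivial (``if'') implication from Theorem \ref{thm:class1}, applied to the space $X = X_{n}$ with the open set $U = [2,n]$ and its closed complement $Y = \{1\}$, after replacing each $\mathfrak{A}_{i}$ by its stabilization $\mathfrak{A}_{i} \otimes \K$ (harmless, since $\mathrm{FK}_{X_{n}}$ and all the structural hypotheses are stable and $\mathfrak{A}_{i}\otimes\K$ is stable as required by Theorem \ref{thm:class1}). For the two classes demanded by that theorem I would take $\mathcal{C}_{\mathcal{I},U}$ to be the stable, separable, nuclear, $\mathcal{O}_{\infty}$-absorbing $C^{*}$-algebras over $[2,n]$, which satisfies Definition \ref{def:class} by Example \ref{piex}, and $\mathcal{C}_{\mathcal{Q},Y}$ to be the stable AF algebras, which satisfies Definition \ref{def:class} by Example \ref{afex}. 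With this choice the ideal is $\mathfrak{A}_{i}(U) = \mathfrak{A}_{i}[2,n]$ and the quotient is $\mathfrak{A}_{i}(Y) = \mathfrak{A}_{i}[1]$, so the distinguished extension $\mathfrak{e}_{i}$ is precisely $0 \to \mathfrak{A}_{i}[2,n] \to \mathfrak{A}_{i} \to \mathfrak{A}_{i}[1] \to 0$.

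Next I would verify the standing hypotheses of Theorem \ref{thm:class1}. The ideal $\mathfrak{A}_{i}[2,n]$ lies in $\mathcal{C}_{\mathcal{I},U}$ by assumption and has the corona factorization property by Proposition \ref{prop:cfprop}, since it has finitely many ideals and each of its simple subquotients is a stable Kirchberg algebra. For a norm-full projection in $\mathfrak{A}_{i}[2,n]$: being $\mathcal{O}_{\infty}$-absorbing with finitely many ideals it has real rank zero, so a nonzero projection in its simple quotient $\mathfrak{A}_{i}[2] = \mathfrak{A}_{i}[2,n]/\mathfrak{A}_{i}[3,n]$ lifts to a projection of $\mathfrak{A}_{i}[2,n]$; as the ideals of $\mathfrak{A}_{i}[2,n]$ form the chain $\mathfrak{A}_{i}[n,n] \subseteq \cdots \subseteq \mathfrak{A}_{i}[2,n]$ with unique maximal member $\mathfrak{A}_{i}[3,n]$, any such lift is norm-full. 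The quotient $\mathfrak{A}_{i}[1]$ is a stable AF algebra with a norm-full projection by hypothesis. Fullness of $\mathfrak{e}_{i}$ I would obtain from Corollary \ref{cor:fullness}: write $\mathfrak{A}_{i}[2,n] = \mathfrak{B}_{0} \otimes \K$ with $\mathfrak{B}_{0} = p\,\mathfrak{A}_{i}[2,n]\,p$ unital and $\mathcal{O}_{\infty}$-absorbing (using the full projection $p$ and Theorem 2.6 of \cite{heralgs}, as in Lemma \ref{l:propermap}), take $\mathfrak{I} = \mathfrak{A}_{i}[3,n]$ to be its largest proper ideal, and note that the induced sequence $0 \to \mathfrak{A}_{i}[2] \to \mathfrak{A}_{i}[1,2] \to \mathfrak{A}_{i}[1] \to 0$ is exactly the essential extension furnished by the hypothesis that $\mathfrak{A}_{i}[2]$ is an essential ideal of $\mathfrak{A}_{i}[1,2]$; Corollary \ref{cor:fullness} then gives that $\mathfrak{e}_{i}$ is full. (When $n=2$ the ideal is already simple and fullness is immediate from essentiality.)

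On the $K$-theoretic side, the given isomorphism $\alpha$ of $\mathrm{FK}_{X_{n}}$ restricts under $r_{X_{n}}^{U}$ and $r_{X_{n}}^{Y}$ to isomorphisms $\alpha_{U}$ and $\alpha_{Y}$. Since every simple subquotient of $\mathfrak{A}_{i}[2,n]$ is purely infinite, the positive cone of each relevant $K_{0}$ is the whole group, so $\alpha_{U}$ is automatically an isomorphism of $\mathrm{FK}_{U}^{+}$; and $\alpha_{Y} = \alpha_{\{1\}}$ is an order isomorphism precisely because $\alpha_{\{1\}}$ is positive (this is the only order datum that is not automatic, and one-sided positivity already forces an order isomorphism here because it is the entire content of the ordered invariant on the AF fibre). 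Finally, as all $\mathfrak{A}_{i}[k]$ lie in $\mathcal{N}$, the Meyer--Nest theorem quoted above shows $\alpha$ lifts to an invertible element of $\kk(X_{n};\mathfrak{A}_{1},\mathfrak{A}_{2})$. All hypotheses of Theorem \ref{thm:class1} being in place, it delivers $\mathfrak{A}_{1}\otimes\K \cong \mathfrak{A}_{2}\otimes\K$.

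For the converse, any isomorphism $\mathfrak{A}_{1}\otimes\K \to \mathfrak{A}_{2}\otimes\K$ must carry $\mathfrak{A}_{1}[2,n]\otimes\K$ onto $\mathfrak{A}_{2}[2,n]\otimes\K$, since $\mathfrak{A}_{i}[2,n]\otimes\K$ is canonically the smallest ideal with AF quotient (a purely infinite ideal of an AF algebra is zero), and, restricted to it, must respect the primitive ideal space $[2,n]$; hence the isomorphism is $X_{n}$-equivariant and induces an isomorphism $\alpha$ of $\mathrm{FK}_{X_{n}}$ with every $\alpha_{Y,0}$ an order isomorphism, in particular with $\alpha_{\{1\}}$ positive. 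I expect the main obstacles to be the two points where the general hypotheses must be massaged into those of Theorem \ref{thm:class1}: establishing fullness of $\mathfrak{e}_{i}$ from the single essentiality assumption at the bottom of the lattice, and producing the norm-full projection in the infinite ideal $\mathfrak{A}_{i}[2,n]$ (where the real-rank-zero and projection-lifting step must be justified carefully); the order-theoretic bookkeeping reducing everything to positivity of $\alpha_{\{1\}}$ is then routine.
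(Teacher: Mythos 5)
Your proposal follows the same route as the paper's own proof: reduce to Theorem \ref{thm:class1} with $U=[2,n]$ and $Y=\{1\}$, taking $\mathcal{C}_{\mathcal{I},U}$ and $\mathcal{C}_{\mathcal{Q},Y}$ from Examples \ref{piex} and \ref{afex}, obtaining fullness from Corollary \ref{cor:fullness} (you correctly identify the hypothesis that $\mathfrak{A}_i[2]$ be essential in $\mathfrak{A}_i[1,2]$ as precisely the essentiality input of that corollary), the corona factorization property from Proposition \ref{prop:cfprop}, and the $\kk(X_n)$-lifting from the quoted Meyer--Nest theorem. In several places you are more careful than the paper: the separate treatment of $n=2$, the construction of the norm-full projection in $\mathfrak{A}_i[2,n]$ via real rank zero and projection lifting (the paper merely asserts its existence), and the argument for the trivial direction. (Note that both you and the paper implicitly assume the Kirchberg fibres $\mathfrak{A}_i[k]$, $k\geq 2$, satisfy the UCT; this is needed both for the Meyer--Nest lifting and for Theorem \ref{thm:class1}, and is not among the stated hypotheses.)

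There is, however, one step that is wrong as you justify it: the claim that ``one-sided positivity already forces an order isomorphism'' on the AF fibre. This is false for ordered groups: on $K_0(\K\oplus\K)=(\Z^2,\Z_+^2)$ the matrix $\left(\begin{smallmatrix}1&1\\0&1\end{smallmatrix}\right)$ is a positive group automorphism whose inverse is not positive. Theorem \ref{thm:class1} genuinely requires $\alpha_{\{1\}}$ to be an order isomorphism, since the class of Example \ref{afex} feeds into Elliott's theorem, which needs an order isomorphism of $K_0$ (as does the conclusion: a $*$-isomorphism always induces one). The gap is not cosmetic. Take two essential extensions of $\K\oplus\K$ by the stable Kirchberg algebra with $K_0=0$ and $K_1=\Z/5$, whose index maps $\Z^2\to\Z/5$ are $(1,2)$ and $(1,1)$ respectively; the pair consisting of the matrix above and the identity on $\Z/5$ is an isomorphism of filtered $K$-theory that is positive on the $\{1\}$-fibre, yet no such isomorphism exists whose $\{1\}$-component is an order isomorphism (i.e., a permutation matrix), so the two middle algebras are not stably isomorphic. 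In other words, the proposition is only correct if ``$\alpha_{\{1\}}$ is positive'' is read as ``$\alpha_{\{1\}}$ is an order isomorphism,'' and your reduction must use that stronger reading. To be fair, the paper's own terse proof is silent on exactly this point, and in the intended application (Theorem \ref{thm:classgraph}) one starts from a full $\mathrm{FK}^+$-isomorphism so nothing is lost there; but your proposal is the one that commits to the false justification, and it should be replaced by the stronger interpretation of the hypothesis.
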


\begin{proof}
Since $\mathfrak{A}_{i}[2,n]$ is a tight $C^{*}$-algebra over $[2,n]$, by Theorem 3.14 of \cite{realrank}, there exists a norm-full projection $p$ in $\mathfrak{A}_{i}[2,n]$.  By Brown's Theorem \cite{heralgs}, $p \mathfrak{A}_{i}[2,n] p \otimes \K \cong \mathfrak{A}_{i}[2,n] \otimes \K$.  Since $\mathfrak{A}_{i}[2,n]$ is an $\mathcal{O}_{\infty}$-absorbing $C^{*}$-algebra, by Corollary~3.1 of \cite{tomswinterZstable}, $p \mathfrak{A}_{i} [2,n] p$ is an $\mathcal{O}_{\infty}$-absorbing $C^{*}$-algebra.  By Corollary \ref{cor:fullness}, 
\begin{align*}
0  \to \mathfrak{A}_{i} [2, n] \otimes \K \to \mathfrak{A}_{i} \otimes \K \to \mathfrak{A}_{i} [ 1 ] \otimes \K \to 0
\end{align*}
is a full extension for $n = 3$.  Suppose $n = 2$.  Then $\mathfrak{A}_{i}[2,2]$ is a purely infinite simple $C^{*}$-algebra, hence $\corona{ \mathfrak{A}_{i} [ 2,2] \otimes \K}$ is a simple $C^{*}$-algebra.  Thus, 
\begin{align*}
0  \to \mathfrak{A}_{i} [2, 2] \otimes \K \to \mathfrak{A}_{i} \otimes \K \to \mathfrak{A}_{i} [ 1 ] \otimes \K \to 0
\end{align*}
is a full extension.  By Proposition \ref{prop:cfprop}, $\mathfrak{A}_{i} [ 2,n]$ has the corona factorization property.  The theorem now follows from Theorem \ref{thm:class1}.
\end{proof}

The following specialization of the result above is in a certain sense dual to Theorem 4.7 of \cite{semt_classgraphalg}.

\begin{corol}\label{cor:classgraph1}
Let $\mathfrak{A}_{1}$ and $\mathfrak{A}_{2}$ be graph $C^{*}$-algebras satisfying Condition (K) and $\mathfrak{A}_{1}$ and $\mathfrak{A}_{2}$ are $C^{*}$-algebras over $X_{2}$.  Suppose $\mathfrak{A}_{i} [ 2 ]$ is $\mathcal{O}_{\infty}$-absorbing, $\mathfrak{A}_{i} [ 2]$ is the smallest ideal of $\mathfrak{A}_{i}$, and $\mathfrak{A}_{i} [ 1 ]$ is an AF algebra.  Then $\mathfrak{A}_{1} \otimes \K \cong \mathfrak{A}_{2} \otimes \K$ if and only if there exists an isomorphism $\ftn{ \alpha }{ \mathrm{FK}_{X_{n}} ( \mathfrak{A}_{1} ) } { \mathrm{FK}_{X_{n}} ( \mathfrak{A}_{2} ) }$ such that $\alpha_{ \{ 1 \} }$ is positive. 
\end{corol}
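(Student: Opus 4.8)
The statement will follow by specializing Proposition \ref{prop:classgraph1} to the space $X_{2}$ and applying it to the stabilizations $\mathfrak{B}_{i} = \mathfrak{A}_{i} \otimes \K$. The \emph{only if} direction is immediate: since $\mathfrak{A}_{i}[2]$ is the smallest nonzero ideal of $\mathfrak{A}_{i}$, the ideal $\mathfrak{B}_{i}[2] = \mathfrak{A}_{i}[2] \otimes \K$ is the smallest nonzero ideal of $\mathfrak{B}_{i}$, so any $*$-isomorphism $\mathfrak{A}_{1} \otimes \K \to \mathfrak{A}_{2} \otimes \K$ carries this ideal onto its counterpart and is therefore automatically $X_{2}$-equivariant; it thus induces an isomorphism of $\mathrm{FK}_{X_{2}}^{+}$, and in particular an isomorphism $\alpha$ with $\alpha_{\{1\}}$ positive. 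For the \emph{if} direction, since $\mathrm{FK}_{X_{2}}$ is invariant under stabilization the given $\alpha$ is also an isomorphism $\mathrm{FK}_{X_{2}}(\mathfrak{B}_{1}) \to \mathrm{FK}_{X_{2}}(\mathfrak{B}_{2})$ with $\alpha_{\{1\}}$ positive, and the conclusion $\mathfrak{B}_{1} \otimes \K \cong \mathfrak{B}_{2} \otimes \K$ of Proposition \ref{prop:classgraph1} is exactly $\mathfrak{A}_{1} \otimes \K \cong \mathfrak{A}_{2} \otimes \K$. Thus the whole argument reduces to checking that $\mathfrak{B}_{1}$ and $\mathfrak{B}_{2}$ satisfy the hypotheses of Proposition \ref{prop:classgraph1} with $n = 2$.

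Most of these hypotheses I would verify directly. The algebras $\mathfrak{B}_{i}$ are separable, nuclear \cstar-algebras over $X_{2}$ whose simple subquotients lie in $\mathcal{N}$, being stabilized graph algebras. The ideal $\mathfrak{A}_{i}[2]$ is \emph{simple}: since it is the smallest nonzero ideal of $\mathfrak{A}_{i}$, and since every closed ideal of a closed ideal of a \cstar-algebra is again a closed ideal of the ambient algebra, any nonzero closed ideal of $\mathfrak{A}_{i}[2]$ is a nonzero ideal of $\mathfrak{A}_{i}$ and hence contains $\mathfrak{A}_{i}[2]$. Being simple, separable, nuclear, $\mathcal{O}_{\infty}$-absorbing and in $\mathcal{N}$, $\mathfrak{A}_{i}[2]$ is a Kirchberg algebra satisfying the UCT, so $\mathfrak{B}_{i}[2] = \mathfrak{A}_{i}[2] \otimes \K$ is a stable Kirchberg algebra; as simplicity is precisely tightness over the one-point set $\{2\} = [2,2]$, this is a tight, stable, $\mathcal{O}_{\infty}$-absorbing \cstar-algebra over $[2,2]$. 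Because $\mathfrak{A}_{i}[2]$ is the smallest ideal it is essential in $\mathfrak{A}_{i}$, so $\mathfrak{B}_{i}[2]$ is essential in $\mathfrak{B}_{i}$. A norm-full projection in $\mathfrak{B}_{i}[2]$ comes for free: a purely infinite simple \cstar-algebra contains a nonzero projection, which is norm-full by simplicity and remains so after tensoring with $\K$. Finally $\mathfrak{B}_{i}[1] = \mathfrak{A}_{i}[1] \otimes \K$ is a stable AF algebra.

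The one genuinely graph-theoretic point, and the main obstacle, is producing a norm-full projection in the AF quotient $\mathfrak{B}_{i}[1]$, equivalently in $\mathfrak{A}_{i}[1] = \mathfrak{A}_{i}/\mathfrak{A}_{i}[2]$, since a general AF algebra need not contain one. Here I would use that $\mathfrak{A}_{i}$ is a graph \cstar-algebra satisfying Condition (K) in which $\mathfrak{A}_{i}[2]$ is the essential smallest ideal: the resulting connectivity of the underlying graph forces enough of the quotient graph that a suitable finite sum of vertex projections is full. Concretely, following the argument in the proof of Proposition \ref{prop:afpifull} and using Proposition 3.10 of \cite{semt_classgraphalg}, I would locate a norm-full projection $p$ in $\mathfrak{A}_{i}$; its image under the quotient map is then norm-full in $\mathfrak{A}_{i}[1]$, and tensoring with a rank-one projection yields the required norm-full projection in $\mathfrak{B}_{i}[1]$. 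With every hypothesis of Proposition \ref{prop:classgraph1} in place for $\mathfrak{B}_{1}$ and $\mathfrak{B}_{2}$, the proposition delivers $\mathfrak{A}_{1} \otimes \K \cong \mathfrak{A}_{2} \otimes \K$, completing the harder implication.
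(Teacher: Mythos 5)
Your reduction is the same as the paper's: Corollary \ref{cor:classgraph1} is stated there without proof, as a direct specialization of Proposition \ref{prop:classgraph1} (with $n=2$, applied to stabilizations), and your verifications of the routine hypotheses are all correct --- simplicity of $\mathfrak{A}_{i}[2]$ from minimality, tightness over the one-point space, stability and $\mathcal{O}_{\infty}$-absorption after tensoring with $\K$, essentiality, the full projection in the ideal, and the ``only if'' direction. The genuine gap is exactly at the step you flag as the main obstacle. Your plan is to produce a norm-full projection in $\mathfrak{A}_{i}$ and push it into the AF quotient, but under the stated hypotheses no norm-full projection need exist in $\mathfrak{A}_{i}$, nor in $\mathfrak{A}_{i}[1]$, so no argument can produce one. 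Concretely, let $E$ be the graph with vertices $w, v_{1}, v_{2}, \dots$, two loops at $w$, and infinitely many edges from each $v_{i}$ to $w$. Then $E$ satisfies Condition (K); since each $v_{i}$ is an infinite emitter, $\{w\}$ is hereditary and saturated with no breaking vertices, and every nonzero ideal of $C^{*}(E)$ contains $I_{\{w\}}$. The ideal $I_{\{w\}}$ is Morita equivalent to $\mathcal{O}_{2}$ and non-unital (it is a proper essential ideal), hence stable and $\mathcal{O}_{\infty}$-absorbing, while $C^{*}(E)/I_{\{w\}} \cong c_{0}$, an AF algebra whose projections are all finitely supported and which therefore contains no norm-full projection; a fortiori $C^{*}(E)$ contains none, since the image of a norm-full projection under the quotient map would be norm-full in $c_{0}$. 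So this algebra satisfies every hypothesis of the corollary, yet the hypothesis of Proposition \ref{prop:classgraph1} you need cannot be met. Note also that Proposition 3.10 of \cite{semt_classgraphalg} cannot help: it concerns the dual configuration (AF ideal, purely infinite quotient), where it is used in Proposition \ref{prop:afpifull} to make a corner of the AF ideal stable, and its conclusion (a full projection in the graph algebra) is simply false in the present configuration, as the example shows.

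What your analysis actually uncovers is that the corollary, read literally, does not follow from Proposition \ref{prop:classgraph1}: the norm-full projection hypothesis on the AF quotient has been silently dropped, and it is not automatic. The reading consistent with the rest of the paper --- the Remark following Proposition \ref{prop:classgraph2}, and the use of this configuration in case (iv) of Theorem \ref{thm:classgraph}, where the algebras are tight over $X_{n}$ --- is that $\mathfrak{A}_{i}$ has finitely many ideals (for instance, is tight over $X_{2}$, making $\mathfrak{A}_{i}[1]$ simple). Under that reading the missing hypothesis is immediate from the paper's own observation that an AF algebra with finitely many ideals contains a norm-full projection (take the unit of a sufficiently large finite-dimensional subalgebra), no graph-theoretic argument is needed, and the remainder of your verification then completes the proof correctly.
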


\begin{propo}\label{prop:classgraph2}
Let $\mathfrak{A}_{1}$ and $\mathfrak{A}_{2}$ be graph $C^{*}$-algebras satisfying Condition (K).  Suppose $\mathfrak{A}_{i}$ is a $C^{*}$-algebra over $X_{n}$ such that $\mathfrak{A}_{i} [ n ]$ is an AF algebra, for every ideal $\mathfrak{I}$ of $\mathfrak{A}_{i}$ we have that $\mathfrak{I} \subseteq \mathfrak{A}_{i} [ n ]$ or $\mathfrak{A}_{i}[ n ] \subseteq \mathfrak{I}$, and $\mathfrak{A}_{i} [1, n-1]$ is a tight, $\mathcal{O}_{ \infty }$-absorbing $C^{*}$-algebra over $[1,n-1]$.  Then $\mathfrak{A}_{1} \otimes \K \cong \mathfrak{A}_{2} \otimes \K$ if and only if there exists an isomorphism $\ftn{ \alpha }{ \mathrm{FK}_{X_{n}} ( \mathfrak{A}_{1} ) } { \mathrm{FK}_{X_{n}} ( \mathfrak{A}_{2} ) }$ such that $\alpha_{ \{ n \} }$ is positive. 
\end{propo}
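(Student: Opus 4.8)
The plan is to mirror the proof of Proposition \ref{prop:classgraph1}, with the roles of ideal and quotient interchanged, reducing everything to a single application of Theorem \ref{thm:class1}. Concretely, I would take the open set to be $U=\{n\}=[n,n]$, so that $Y=X_n\setminus U=[1,n-1]$ is closed (hence locally closed) and, after stabilizing, $(\mathfrak{A}_i\otimes\K)(U)=\mathfrak{A}_i[n]\otimes\K$ is the AF \emph{ideal} while $(\mathfrak{A}_i\otimes\K)(Y)=\mathfrak{A}_i[1,n-1]\otimes\K\cong(\mathfrak{A}_i/\mathfrak{A}_i[n])\otimes\K$ is the $\mathcal{O}_{\infty}$-absorbing \emph{quotient}. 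Accordingly one takes $\mathcal{C}_{\mathcal{I},U}$ to be the class of stable AF algebras of Example \ref{afex} and $\mathcal{C}_{\mathcal{Q},Y}$ to be the class of stable $\mathcal{O}_{\infty}$-absorbing algebras over $[1,n-1]$ of Example \ref{piex}.

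The forward implication is the easy one: an isomorphism $\mathfrak{A}_1\otimes\K\cong\mathfrak{A}_2\otimes\K$ is automatically $X_n$-equivariant for these tight algebras, so functoriality of $\mathrm{FK}_{X_n}$ produces an isomorphism $\alpha$ that is order preserving on every subquotient, in particular $\alpha_{\{n\}}$ is positive. For the converse I would verify the hypotheses of Theorem \ref{thm:class1} one at a time. Fullness of the extensions $\mathfrak{e}_i:0\to\mathfrak{A}_i[n]\otimes\K\to\mathfrak{A}_i\otimes\K\to\mathfrak{A}_i[1,n-1]\otimes\K\to0$ is exactly the content of Corollary \ref{cor:afpifull}, whose hypotheses---a graph \cstar-algebra satisfying Condition (K), an AF ideal $\mathfrak{A}_i[n]$ comparable to every ideal of $\mathfrak{A}_i$, and an $\mathcal{O}_{\infty}$-absorbing quotient---are precisely what the present proposition assumes. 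The corona factorization property for the ideal $\mathfrak{A}_i[n]\otimes\K$ follows from Proposition \ref{prop:cfprop}, since $\mathfrak{A}_i[n]$ has finitely many ideals and all of its simple subquotients are (stably) AF. The norm-full projection in $\mathfrak{A}_i[n]$ is assumed outright, and $\mathfrak{A}_i[1,n-1]$, being tight and $\mathcal{O}_{\infty}$-absorbing over a finite linear space, contains a full projection by the same argument used in Proposition \ref{prop:classgraph1}; both survive stabilization upon cutting by $e_{11}$.

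It then remains to supply the $\kk(X_n;-,-)$-datum demanded by Theorem \ref{thm:class1}. The restriction $\alpha_U=\alpha_{\{n\}}$ is an order isomorphism of $\mathrm{FK}_U^{+}$ by the positivity hypothesis, since $U$ is a single point and $\mathrm{FK}_U^{+}$ is just the ordered pair $(K_0,K_1)$ of the AF algebra $\mathfrak{A}_i[n]$; while $\alpha_Y$ is automatically an order isomorphism because every simple subquotient of $\mathfrak{A}_i[1,n-1]$ is purely infinite, so each relevant positive cone $K_0(\cdot)_{+}$ equals the whole group. As each $\mathfrak{A}_i[k]$ lies in the bootstrap class $\mathcal{N}$, the universal coefficient theorem of Meyer and Nest \cite{rmrn:uctkkx} lets one lift $\alpha$ to an invertible element of $\kk(X_n;\mathfrak{A}_1\otimes\K,\mathfrak{A}_2\otimes\K)$. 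Feeding this into Theorem \ref{thm:class1} yields $\mathfrak{A}_1\otimes\K\cong\mathfrak{A}_2\otimes\K$. The step I expect to require the most care---and the main obstacle---is the positivity bookkeeping: one must confirm that positivity of the single map $\alpha_{\{n\}}$, together with the triviality of the order on the purely infinite quotient, suffices to make both $\alpha_U$ and $\alpha_Y$ order isomorphisms in the precise sense required by Theorem \ref{thm:class1}, and that the orders on the intermediate subquotients along $[1,n-1]$ genuinely impose no further constraint.
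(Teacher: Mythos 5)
Your overall route coincides with the paper's: fullness of $\mathfrak{e}_i\colon 0\to\mathfrak{A}_i[n]\otimes\K\to\mathfrak{A}_i\otimes\K\to\mathfrak{A}_i[1,n-1]\otimes\K\to 0$ via Corollary \ref{cor:afpifull}, the corona factorization property for the ideal, a $\kk(X_n;-,-)$-lift of $\alpha$ via Meyer--Nest, and then Theorem \ref{thm:class1}. However, your justification of the corona factorization property has a genuine gap. Proposition \ref{prop:cfprop} applies only to $C^*$-algebras with \emph{finitely many} ideals (its proof is an induction on the number of ideals), and nothing in the hypotheses of Proposition \ref{prop:classgraph2} bounds the number of ideals of $\mathfrak{A}_i[n]$: the comparability assumption constrains how ideals of $\mathfrak{A}_i$ sit relative to $\mathfrak{A}_i[n]$, but places no restriction on the ideal lattice \emph{inside} $\mathfrak{A}_i[n]$, which may be infinite; the remark following the two propositions makes explicit that finiteness of the ideal lattice is not assumed (it is invoked there only as a sufficient condition for the norm-full projection hypothesis). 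This is exactly the point where ``mirroring'' Proposition \ref{prop:classgraph1} breaks down: in that proposition the \emph{ideal} $\mathfrak{A}_i[2,n]$ is tight over the finite space $[2,n]$, hence has finitely many ideals and Proposition \ref{prop:cfprop} applies, whereas here it is the \emph{quotient} that is tight, and the AF ideal is unconstrained. The paper instead cites Lemma 3.10 of \cite{ERRshift}, which gives the corona factorization property for stabilizations of arbitrary AF algebras (alternatively, the Ortega--Perera--R{\o}rdam result mentioned in the paper covers all stabilized graph $C^*$-algebras). With that substitution the rest of your argument goes through.

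A minor further slip: in the forward direction you say an isomorphism $\mathfrak{A}_1\otimes\K\cong\mathfrak{A}_2\otimes\K$ is automatically $X_n$-equivariant ``for these tight algebras,'' but $\mathfrak{A}_i$ is \emph{not} tight over $X_n$ (only $\mathfrak{A}_i[1,n-1]$ is tight over $[1,n-1]$, and $\mathfrak{A}_i[n]$ may have many ideals). Equivariance instead follows from canonicity: $\mathfrak{A}_i[n]\otimes\K$ is the largest AF ideal, since any strictly larger AF ideal would produce a nonzero AF ideal of the $\mathcal{O}_\infty$-absorbing algebra $\mathfrak{A}_i[1,n-1]\otimes\K$, which is impossible; and on the quotients, which are tight over the linear space $[1,n-1]$, any isomorphism matches up the two finite chains of ideals automatically.
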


\begin{proof}
By Corollary \ref{cor:afpifull}, $0 \to \mathfrak{A}_{i} [ n ] \otimes \K \to \mathfrak{A}_{i} \otimes \K \to \mathfrak{A}_{i} [1,n-1]  \otimes \K \to 0$ is a full extension.  By Lemma 3.10 of \cite{ERRshift}, $\mathfrak{A}_{i} [ n ] \otimes \K$ satisfies the corona factorization property.  The result now follows from Theorem \ref{thm:class1}.
\end{proof}

\begin{remar}
Note that the above propositions do not assume that the ideal lattice of $\mathfrak{A}_{i}$ is linear.  For example, Proposition \ref{prop:classgraph2} can be applied to $C^{*}$-algebras $\mathfrak{A}$ that are tight $C^{*}$-algebras over $X = \{ 1, 2, 3, 4 \}$ with $\mathbb{O} (X) = \{ \emptyset \} \cup \setof{ U \subseteq X }{ 4 \in U }$ such that 
\begin{itemize}
\item[(1)] $\mathfrak{A} ( \{ 4 \} )$ is purely infinite

\item[(2)] $\mathfrak{A}( \{ 1, 2, 3 \} )$ is an AF algebra.
\end{itemize} 

In a forthcoming paper, we study graph $C^{*}$-algebras with small ideal structures similar to the $C^{*}$-algebra described above.  Proposition \ref{prop:classgraph1}, Proposition \ref{prop:classgraph2}, and related results will be used to classify these graph $C^{*}$-algebras.    
\end{remar}

\begin{defin}
For each $n \in \N$, we define a class $\mathcal{C}_{n}$ of graph $C^{*}$-algebras as follows:  $\mathfrak{A}$ is in $\mathcal{C}_{n}$ if
\begin{itemize}
\item[(1)] $\mathfrak{A}$ is a graph $C^{*}$-algebra;

\item[(2)] $\mathfrak{A}$ is a tight $C^{*}$-algebra over $X_{n}$; and

\item[(3)] there exists $U \in \mathbb{O} ( X_{n} )$ such that either $\mathfrak{A}(U)$ is an AF algebra and $\mathfrak{A}( X_{n} \setminus U )$ is $\mathcal{O}_{\infty}$-absorbing or $\mathfrak{A} (U)$ is $\mathcal{O}_{\infty}$-absorbing and $\mathfrak{A}( X \setminus U )$ is an AF algebra.
\end{itemize}
\end{defin}
Note that if $C^{*} (E)$ is an element in $\mathcal{C}_{n}$, then by the proof of Lemma 3.1 of \cite{semt_classgraphalg}, $E$ satisfies Condition (K).  

\begin{theor}\label{thm:classgraph}
Let $E_{1}$ and $E_{2}$ be graphs such that $C^{*} ( E_{1} )$ and $C^{*} ( E_{2} )$ are in $\mathcal{C}_{n}$ for some $n \in \N$.  Then the following are equivalent:
\begin{itemize}
\item[(1)] $C^{*} ( E_{1} ) \otimes \K \cong C^{*} ( E_{2} ) \otimes \K$

\item[(2)]  There exists an isomorphism $\ftn{ \alpha}{\mathrm{FK}_{X_{n}}^{+}( C^{*} ( E_{1} ) )}{\mathrm{FK}_{X_{n}}^{+}( C^{*} ( E_{2} ) )}$
\end{itemize}
\end{theor}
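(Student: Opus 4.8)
The plan is to obtain both implications from the machinery of Section~\ref{class}, with all the content residing in $(2)\Rightarrow(1)$. The implication $(1)\Rightarrow(2)$ I would treat as a formal consequence of functoriality: since $C^{*}(E_{1})$ and $C^{*}(E_{2})$ lie in $\mathcal{C}_{n}$ they are tight over $X_{n}$, hence so are $C^{*}(E_{i})\otimes\K$ (stabilizing does not alter $\mathrm{Prim}$). A $*$-isomorphism between tight algebras over $X_{n}$ is automatically $X_{n}$-equivariant by the Remark following Example~\ref{Xn}, so it induces isomorphisms on every subquotient $K$-group compatible with $\iota_{*},\pi_{*},\partial_{*}$ and preserving the positive cones; as $K$-theory is stable this is precisely an isomorphism $\mathrm{FK}_{X_{n}}^{+}(C^{*}(E_{1}))\to\mathrm{FK}_{X_{n}}^{+}(C^{*}(E_{2}))$.

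For $(2)\Rightarrow(1)$ I would set $\mathfrak{A}_{i}=C^{*}(E_{i})\otimes\K$ and take the open set $U\in\mathbb{O}(X_{n})$ furnished by the definition of $\mathcal{C}_{n}$, writing $Y=X_{n}\setminus U$. By the note preceding the theorem each $E_{i}$ satisfies Condition~(K). According to the dichotomy in the definition there are two cases: either $\mathfrak{A}_{i}(U)$ is AF with $\mathfrak{A}_{i}(Y)$ being $\mathcal{O}_{\infty}$-absorbing, or the reverse. In each case the aim is to apply Theorem~\ref{thm:class1} to the extension $\mathfrak{e}_{i}:0\to\mathfrak{A}_{i}(U)\to\mathfrak{A}_{i}\to\mathfrak{A}_{i}(Y)\to0$, using Example~\ref{piex} for the class on the purely infinite side and the class of stable AF algebras over the relevant space on the AF side. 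The hypotheses of Theorem~\ref{thm:class1} are checked as follows: stability of $\mathfrak{A}_{i}(U)$ and $\mathfrak{A}_{i}(Y)$ is inherited from $\mathfrak{A}_{i}$; the requisite norm-full projections exist by the Remark after Proposition~\ref{prop:classgraph2} on the AF side and because a tight $\mathcal{O}_{\infty}$-absorbing algebra contains a full projection on the other; the corona factorization property of the ideal follows from Proposition~\ref{prop:cfprop}; and fullness of $\mathfrak{e}_{i}$ is exactly the output of Section~\ref{full}, namely Corollary~\ref{cor:afpifull} when the AF piece is the ideal and Corollary~\ref{cor:fullness} when it is the quotient. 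Since the simple subquotients of a graph $C^{*}$-algebra lie in $\mathcal{N}$, the Meyer--Nest theorem quoted above lifts $\alpha$ to an invertible $x\in\kk(X_{n};\mathfrak{A}_{1},\mathfrak{A}_{2})$; restricting $x$ along $r_{X_{n}}^{U}$ and $r_{X_{n}}^{Y}$ gives invertible $\kk$-elements inducing the positive isomorphisms $\alpha_{U}$ and $\alpha_{Y}$, the positivity being inherited from the fact that $\alpha$ is an isomorphism of $\mathrm{FK}_{X_{n}}^{+}$. Theorem~\ref{thm:class1} then yields $\mathfrak{A}_{1}\cong\mathfrak{A}_{2}$, which is $(1)$. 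In the extremal sub-cases, where the AF part is a single point sitting at the top or bottom of the lattice, one may instead invoke Proposition~\ref{prop:classgraph2} or Proposition~\ref{prop:classgraph1} directly.

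I expect the main obstacle to be the AF side of the classification input rather than the purely infinite side. Example~\ref{afex} only records that stable AF algebras over the one-point space $X_{1}$ satisfy Definition~\ref{def:class}, whereas for a general $U$ the subquotient $\mathfrak{A}_{i}(U)$ (or $\mathfrak{A}_{i}(Y)$) is an AF algebra with a nontrivial linear ideal lattice. Thus the step needing the most care is verifying that stable AF algebras over the finite space $U$ are strongly classified by filtered ordered $K_{0}$, with a $\kk(U;-,-)$-realization of the classifying isomorphism, so that property~(3) of Definition~\ref{def:class} holds there; this is where the hypothesis of a \emph{single} AF/purely-infinite transition is genuinely used, since it confines all the order data to one of the two pieces and lets Kirchberg's theorem handle the other with no order constraint. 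Granting this AF-over-$U$ classification together with the fullness supplied by Section~\ref{full}, the remaining arguments assemble routinely through Theorem~\ref{thm:class1}.
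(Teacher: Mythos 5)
Your overall architecture for $(2)\Rightarrow(1)$ --- Theorem \ref{thm:class1} fed by Example \ref{piex} on the purely infinite side, an AF class on the other side, the fullness results of Section \ref{full}, Proposition \ref{prop:cfprop}, and the Meyer--Nest lifting --- is indeed the paper's architecture, but your proof has a genuine gap at its very first step: you ``take the open set $U\in\mathbb{O}(X_{n})$ furnished by the definition of $\mathcal{C}_{n}$'' as if the two algebras came with a \emph{common} $U$ and a \emph{common} orientation. They do not. Membership in $\mathcal{C}_{n}$ only provides, for each $i$ separately, some open set $U_{i}$ such that $C^{*}(E_{i})(U_{i})$ is AF and its complement $\mathcal{O}_{\infty}$-absorbing, or the reverse; a priori $U_{1}\neq U_{2}$, and the AF part could sit on the ideal side for one algebra and on the quotient side for the other. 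Before any extension machinery can be set up one must show that an isomorphism of $\mathrm{FK}_{X_{n}}^{+}$ forces the two decompositions to agree. This is exactly the paper's opening observation: by Cuntz, $K_{0}(\mathfrak{A})=K_{0}(\mathfrak{A})_{+}$ for an $\mathcal{O}_{\infty}$-absorbing algebra with a norm-full projection, whereas $K_{0}(\mathfrak{B})\neq K_{0}(\mathfrak{B})_{+}$ for a nonzero AF algebra, so no positive isomorphism can match an AF simple subquotient with a purely infinite one; since $\alpha_{\{k\},0}$ is an order isomorphism at every point $k$, the type of each simple subquotient agrees for the two algebras, and the proof splits into four aligned cases (both AF; both $\mathcal{O}_{\infty}$-absorbing; AF piece above the purely infinite piece at a common cut point $k$; the reverse). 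Without this step your case division is unjustified --- and note it is precisely here that the positivity hypothesis in $(2)$ is consumed. Note also that two of the four cases, the pure ones, cannot be pushed through Theorem \ref{thm:class1} at all: there the relevant extension has zero ideal, hence is not essential, so Theorem \ref{thm:intkkel} is unavailable; the paper disposes of these cases directly by Elliott's theorem and by Theorem 4.14 of \cite{rmrn:uctkkx}.

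Secondly, the step you flag as the ``main obstacle'' --- that stable AF algebras over a multi-point space satisfy Definition \ref{def:class} --- is left unproven in your write-up, and the paper simply avoids it: in Propositions \ref{prop:classgraph1} and \ref{prop:classgraph2} the whole AF piece occupies a \emph{single} point of the base space (one passes from $X_{n}$ to a smaller $X_{m}$ by the collapsing functor in item (3) of the list preceding Lemma \ref{lem:commfunctor}, forgetting the internal ideal structure of the AF part), so that Example \ref{afex} over the one-point space applies verbatim while the remaining points carry the tight $\mathcal{O}_{\infty}$-absorbing part. Incidentally, your obstacle is less serious than you suggest: condition (3) of Definition \ref{def:class} only demands an isomorphism $\phi$ with $\kk(\phi)=g_{U}^{1}(\alpha)$ in plain, non-equivariant $\kk$, so for stable AF algebras it follows from Elliott's theorem together with $\kk(\mathfrak{A},\mathfrak{B})\cong\Hom(K_{0}(\mathfrak{A}),K_{0}(\mathfrak{B}))$, exactly as in Example \ref{afex}; no $\kk(U;-,-)$-realization of the classifying isomorphism is required. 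So that part of your plan is fillable, whereas the missing alignment argument is not a routine omission but the key idea of the paper's proof.
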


\begin{proof}
Suppose there exists an isomorphism $\ftn{ \alpha}{\mathrm{FK}_{X_{n}}^{+}( C^{*} ( E_{1} ) )}{\mathrm{FK}_{X_{n}}^{+}( C^{*} ( E_{2} ) )}$.  Note that by Cuntz \cite{kthypureinf}, if $\mathfrak{A}$ is an $\mathcal{O}_{\infty}$-absorbing with a norm-full projection, then $K_{0} ( \mathfrak{A} ) = K_{0} ( \mathfrak{A} )_{+}$.  Since $K_{0} ( \mathfrak{B} ) \neq K_{0} ( \mathfrak{B} )_{+}$ for any AF algebra, there is no positive isomorphism from the $K_{0}$-group of an AF algebra to the $K_{0}$-group of an $\mathcal{O}_{\infty}$-absorbing $C^{*}$-algebra with a norm-full projection. 

With the above observation, one of the following four cases must happen:

\begin{itemize}
\item[(i)] $C^{*} ( E_{1} )$ and $C^{*} ( E_{2} )$ are AF algebras;
\item[(ii)] $C^{*} ( E_{2} )$ and $C^{*} ( E_{2} )$ are $\mathcal{O}_{ \infty }$-absorbing;
\item[(iii)]  there exists $2 \leq k \leq n$ such that $C^{*} ( E_{i} )[ k , n ]$ is an AF algebra and $C^{*} ( E_{i} )[ 1, k-1]$ is $\mathcal{O}_{ \infty }$-absorbing for $i = 1, 2$;
\item[(iv)] there exists $2 \leq k \leq n$ such that $C^{*} ( E_{i} )[ k , n]$ is $\mathcal{O}_{\infty}$-absorbing  and $C^{*} ( E_{i} )[1 , k-1 ]$ is an AF algebra for $i = 1, 2$.
\end{itemize}

Case (i) follows from the classification of AF algebras.  Case (ii) follows from Theorem 4.14 of \cite{rmrn:uctkkx}.  Case (iii) follows from Proposition \ref{prop:classgraph2} and Case (iv) follows from Proposition \ref{prop:classgraph1}.
\end{proof}

\section{Examples}\label{examples}

\subsection{Case I}
Fix a prime $p$ and consider the class of graph $C^*$-algebras given by adjacency matrices
\[
\begin{bmatrix}
0&0&0\\
z&p+1&0\\
y&x&p+1
\end{bmatrix}
\]
for $y, z > 0$.  Theorem \ref{thm:classgraph} applies directly as the resulting graph $C^*$-algebra has a finite linear ideal lattice $0\triangleleft \mathfrak{I}_1\triangleleft \mathfrak{I}_2\triangleleft \mathfrak{A}$ with subquotients $\mathfrak{I}_1=\KKK, \mathfrak{I}_2/ \mathfrak{I}_1=\OO_{p+1}\otimes\KKK,$ and $\mathfrak{A} / \mathfrak{I}_2=\OO_{p+1}$. All $K_1$-groups in the filtered $K$-theory vanish, and the $K_0$-groups and the natural transformations
\[
\xymatrix{
{K_0( \mathfrak{I}_1)}\ar[r]\ar@{=}[d]&{K_0( \mathfrak{I}_2)}\ar[r]\ar[d]&{K_0( \mathfrak{I}_2 / \mathfrak{I}_1)}\ar[d]\\
{K_0( \mathfrak{I}_1 ) }\ar[r]&{K_0( \mathfrak{A} ) }\ar[r]\ar[d]&{K_0( \mathfrak{A} / \mathfrak{I}_1)}\ar[d]\\
&{K_0( \mathfrak{A} / \mathfrak{I}_2 )}\ar@{=}[r]&{K_0( \mathfrak{A} / \mathfrak{I}_2)}
}
\]
may be computed as
\[
{\xymatrix{
{\ZZ}\save [].[d].[dr].[r]*[F.]\frm{}\restore \ar[r]\ar@{=}[d]
&{\cok\left[\begin{smallmatrix}z\\p\end{smallmatrix}\right]}\ar[r]\ar[d]&{\cok\begin{bmatrix}p
  \end{bmatrix}}\ar[d]\\
{\ZZ}\ar[r]&{\cok\left[\begin{smallmatrix}z&y\\x&p\\p&0\end{smallmatrix}\right]}\ar[r]\ar[d]&{\cok\left[\begin{smallmatrix}x&p\\p&0
  \end{smallmatrix}\right]}\ar[d]\\
&{\cok\begin{bmatrix}p
  \end{bmatrix}}\ar@{=}[r]&{\cok\begin{bmatrix}p
  \end{bmatrix}}
}}
\]
with all maps induced by the canonical maps from $\ZZ^r$ into $\ZZ^s$ for suitably chosen $r$ and $s$.

Checking when two such filtered $K$-theories are isomorphic is not easy. Of course it would be necessary that
\[
p\mid x\Leftrightarrow p\mid x'\qquad
p\mid z\Leftrightarrow p\mid z'
\]
but depending upon the invertibility of $x$ and $z$ in $\ZZ/p$ we get varying conditions on $y$. The work in \cite{abk} (see \cite[p. 33, 39]{sea}) explains how to reduce this task to checking isomorphism only in the part of the invariant enclosed in dashed lines. With this, it is easy to conclude:

\begin{examp}
With graphs $E$ and $E'$ given by matrices
\[
\begin{bmatrix} 
0&0&0\\
z&p+1&0\\
y&x&p+1
\end{bmatrix}
\qquad
\begin{bmatrix}
0&0&0\\
z'&p+1&0\\
y'&x'&p+1
\end{bmatrix},
\]
respectively, we have
$C^*(E)\otimes \KKK\simeq C^*(E')\otimes \KKK$ precisely when
\begin{enumerate}
\item $p\mid x\Leftrightarrow p\mid x'$, and
\item $p\mid z\Leftrightarrow p\mid z'$, and
\item
\begin{enumerate}
\item $p\mid y\Leftrightarrow p\mid y'$ when $p\mid x$ and $p\mid z$
\item $p\mid [y-xz/p]\Leftrightarrow p\mid [y'-x'z'/p]$ when $p\nmid x$ and $p\mid z$
\end{enumerate}
\end{enumerate}
\end{examp}

\subsection{Case II} 
We now consider graphs given by
\[
\begin{bmatrix} 
0&0&0&0\\ 
x&p+1&0&0\\ 
y&0&p+1&0\\ 
z&0&0&p+1
\end{bmatrix}
\] 
with $x, y, z > 0$.  The resulting ideal lattice is not linear; in fact we have an extension
\begin{equation}\label{ourext}
\xymatrix{{0}\ar[r]&{\KKK}\ar[r]&\mathfrak{A}\ar[r]&{\OO_{p+1}\oplus \OO_{p+1}\oplus \OO_{p+1}}\ar[r]&0
}
\end{equation}
showing that the ideal lattice is precisely of the type demonstrated by Meyer and Nest in \cite{rmrn:uctkkx} to not generally allow a UCT for filtered $K$-theory. But since our $C^*$-algebras have real rank zero, we may appeal to \cite{arr} to see that isomorphisms of the filtered $K$-theory, which in this case has the form 
\[
{\xymatrix{
{}\save [].[dd].[ddr].[r]*[F.]\frm{}\restore&{\cok\left[\begin{smallmatrix}x\\p\end{smallmatrix}\right]}\ar[r]\ar[dr]&{\cok\left[\begin{smallmatrix}x&y\\p&0\\0&p\end{smallmatrix}\right]}\ar[dr]&&{\cok\begin{bmatrix}
p
\end{bmatrix}}\\
{\ZZ}\ar[ur]\ar[r]\ar[dr]&{\cok\left[\begin{smallmatrix}y\\p\end{smallmatrix}\right]}\ar[ur]\ar[dr]&{\cok\left[\begin{smallmatrix}x&z\\p&0\\0&p\end{smallmatrix}\right]}\ar[r]&{\cok\left[\begin{smallmatrix}x&y&z\\p&0&0\\0&p&0\\0&0&p\end{smallmatrix}\right]}\ar[r]\ar[ur]\ar[dr]&{\cok\begin{bmatrix}
p
\end{bmatrix}}\\
&{\cok\left[\begin{smallmatrix}z\\p\end{smallmatrix}\right]}\ar[r]\ar[ur]&{\cok\left[\begin{smallmatrix}y&z\\p&0\\0&p\end{smallmatrix}\right]}\ar[ur]&&{\cok\begin{bmatrix}
p,
\end{bmatrix}}}}
\]
lift to invertible elements of $\kk$, so since the ideal $\mathfrak{I}_1 \cong \KKK$ is a least ideal with $\mathfrak{A}/\mathfrak{I}_1$ absorbing $\OO_\infty$ we may apply Theorem \ref{thm:class1}.
Further, as explained in \cite[p.\ 33, 39]{sea} it suffices by  \cite{abk} to check the existence of isomorphisms on the part of the invariant enclosed in dashed lines, and then 
it is straightforward to determine when the filtered $K$-theory for two such matrices are the same; indeed this amounts to
\[
p\mid x\Leftrightarrow p\mid x'\qquad
p\mid y\Leftrightarrow p\mid y'\qquad
p\mid z\Leftrightarrow p\mid z'.
\]
Taking into account the homeomorphims of $\operatorname{Prim}(\mathfrak A)$ we arrive at

\begin{examp}
With graphs $E$ and $E'$ given by matrices
\[
\begin{bmatrix}
0&0&0&0\\
x&p+1&0&0\\
y&0&p+1&0\\
z&0&0&p+1
\end{bmatrix}\qquad
\begin{bmatrix}
0&0&0&0\\
x'&p+1&0&0\\
y'&0&p+1&0\\
z'&0&0&p+1
\end{bmatrix},
\]
respectively, we have $C^*(E)\otimes \KKK\simeq C^*(E')\otimes \KKK$ if and only if 
the number of entries in $(x,y,z)$ which are multiples of $p$ agrees 
with the number of entries in $(x',y',z')$ which are multiples of $p$.
\end{examp}

\begin{remar}
Note that $K\!K^1(\OO_{p+1}\oplus \OO_{p+1}\oplus \OO_{p+1},\KKK)$ does not tell the full story about stable isomorphism among the possible extensions fitting in \eqref{ourext}, as indeed, we have only 4 stable isomorphism classes among the $p^3$ different extensions. 
\end{remar}

\section{Acknowledgments}

This work was supported by the NordForsk Research Network ``Operator algebras and dynamics'' grant \# 11580 and by the Faroese Research Council.  The second and third named author wishes to thank the  Department of Mathematical Sciences at the University of Copenhagen for their support and hospitality.

\end{document}